\newcommand{\conf}[1]{}
\newcommand{\arxiv}[1]{#1}
\newcommand{\TV}{\text{TV}}
\newcommand{\KL}{\text{KL}}
\newcommand{\HE}{\text{H}}
\newcommand{\CS}{\chi^2}
\newcommand{\TE}{\text{T}}
\newcommand{\EE}{\mathbb{E}}
\newcommand{\Var}{\text{Var}}
\newcommand{\RR}{\mathbb{R}}
\newcommand{\indic}{1}
\newcommand{\ignore}[1]{}
\newcommand{\cH}{\mathcal{H}}
\newcommand{\cX}{\mathcal{X}}
\newcommand{\cP}{\mathcal{P}}
\newcommand{\cO}{\mathcal{O}}
\newcommand{\cT}{\mathcal{T}}
\newcommand{\norm}[1]{\left \lVert #1\right \rVert}
\newtheorem{Theorem}{Theorem}
\newtheorem*{Theorem*}{Theorem}
\newtheorem{Lemma}{Lemma}
\newtheorem{Corollary}{Corollary}
\title{Robust hypothesis testing and distribution estimation in Hellinger distance}
\author{Ananda Theertha Suresh}
\affil{Google Research, New York \\ theertha@google.com}
\begin{document}

\maketitle

\begin{abstract}
We propose a simple  robust hypothesis test that has the same sample complexity as that of the optimal Neyman-Pearson test up to constants, but robust to distribution perturbations under Hellinger distance. We discuss the applicability of such a robust test for estimating distributions in Hellinger distance. We empirically demonstrate the power of the test on canonical distributions.
\end{abstract}
\section{Introduction}
\label{sec:introduction}

\subsection{Simple hypothesis testing}

Hypothesis testing and estimating unknown underlying distributions from samples are fundamental problems in statistics and learning theory respectively. The simplest hypothesis testing scenario is the following. Given two known distributions $P$ and $Q$ over a domain $\cX$ and a set of $n$ independent samples $X^n \triangleq X_1, X_2,\ldots, X_n$ generated from an unknown distribution $R \in \{P, Q\}$, simple hypothesis test asks which of the following two hypotheses is true:
\begin{align*}
\cH_0 &: R = P  \\
\cH_1 &: R = Q.
\end{align*}
The best known hypothesis test is the Neyman-Pearson test, 
which outputs $\cH_0$ if 
\[
\frac{P(X^n)}{Q(X^n)} \geq t,
\]
otherwise outputs $\cH_1$ for a suitable threshold $t$~\citep{neyman1933ix, cover2012elements}. 
There are two types of errors  associated with hypothesis testing: type I error and type II error. Type I error is the probability that the test outputs $\cH_1$ if $\cH_0$ is true 
and  type II error is the probability that the test outputs $\cH_0$ if $\cH_1$ is true. The Neyman-Pearson test achieves the best type II error for a given bound on the type I error.

For simplicity, let the error probability of a hypothesis test be the maximum of type I and type II errors. For a test $T$ and distributions $P$ and  $Q$, let $N^T_\delta(P,Q)$ be the number of samples necessary to achieve error probability $\delta$. Let the optimal sample complexity $N^*_\delta(P,Q)$ be the minimum number of samples necessary to achieve an error probability  $\delta$:
\[
N^*_\delta(P,Q) = \min_{T} N^T_\delta(P,Q).
\]
We need few definitions
to state the optimal sample complexity.
For a function $U: \cX \to \RR^{+}$, the $p$-norm of $U$ is given by 
\[
\norm{U}_p = \left( \int_{x \in \cX} |U(x)|^p dx \right)^{1/p}.
\]
For two distributions $P$ and $Q$ over $\cX$\footnote{We state the results for continuous distributions and the exact results hold for discrete distributions.}, the Hellinger distance between $P$ and $Q$ is given by
\[
\HE(P, Q) = \frac{1}{\sqrt{2}} \norm{\sqrt{P} -  \sqrt{Q}}_2.
\]
The sample complexity of the optimal hypothesis test between $P$ and $Q$ is~\citep{bar2002complexity,canonne2019structure}
\begin{equation}
\label{eq:sim_sam}
N^*_\delta(P, Q) = \Theta \left(\frac{\log ({1}/{\delta})}{\HE^2(P, Q)} \right).
\end{equation}

\subsection{Robust hypothesis testing}

In many natural scenarios, the underlying distribution may not be either of $P$ and $Q$, but close to one of them. This can happen due to a
several reasons such as noisy samples, modelling error, or lack of expressivity
in the class of distributions under consideration.  For example,
suppose we have the following two hypotheses: 
\begin{itemize}
\item $\cH_0$: the number of submissions to a conference every year is $\text{Poi}(5000)$, a Poisson distribution with mean $5000$. 
 \item $\cH_1$: the number of submissions to
the conference every year is $\text{Poi}(6000)$. 
\end{itemize}
It is plausible that in reality,
the number of submissions every year is a Poisson mixture
$(1-\epsilon)\cdot \text{Poi}($5000$) + \epsilon \cdot \text{Poi}(10000)$ for a small $\epsilon$. 
In this scenario, it is desirable for the hypothesis test to overcome the modelling error and output $\cH_0$. It is also preferable for the proposed test to have the same sample complexity as the optimal simple hypothesis test. In this paper, we ask the following question:
\begin{center}
\textit{Is there a test with the same sample complexity as that of the Neyman-Pearson test and is robust to a broad class of distribution perturbations?}
\end{center}
We answer this question affirmatively. To define the broad class of distribution perturbations, we need a measure of closeness between distributions. Since Hellinger distance naturally characterizes the sample complexity of optimal hypothesis testing, we ask if there are robust hypothesis tests under the Hellinger distance.

Given two known distributions $P$
and $Q$ over a domain $\cX$, and a set of $n$ independent samples $X^n$ generated from some distribution $R$,
one can ask which of the following two
hypotheses is true:
\begin{align*}
\cH_0 &: \HE(P, R) < \HE(Q, R)  \\
\cH_1 &: \HE(P, R) > \HE(Q, R).
\end{align*}
For distributions such that $|\HE(P,R) - \HE(Q,R)|$ is arbitrarily small, differentiating between the two hypotheses with finitely many samples would not be possible. Hence we propose $\gamma$-robust hypothesis testing as follows: given two known distributions $P$
and $Q$ over a domain $\cX$, and a set of $n$ independent samples $X^n$ generated from some distribution $R$,
we ask which of the following two
hypotheses is true:
\begin{align*}
\cH_0 &: \gamma \cdot \HE(P, R) \leq \HE(Q, R) \\
\cH_1 &: \HE(P, R) \geq  \gamma \cdot \HE(Q, R),
\end{align*} 
for $\gamma > 1$, where $\gamma$ is the slackness term. If neither of the hypotheses is true, then the test can output either of the hypotheses. As before, we define the error of the test as the maximum of type I and type II errors. For a test
$T$, let $N^T_\delta(P, Q, \gamma)$, be the number of samples necessary to achieve error probability $\delta$ for  $\gamma$-robust hypothesis testing and let $N^*_\delta(P, Q, \gamma)$ be the optimal sample complexity of the $\gamma$-robust hypothesis testing:
\[
N^*_\delta(P, Q, \gamma) = \min_{T} N^T_\delta(P, Q, \gamma).
\]
If a test cannot achieve error probability less than $1/2$  asymptotically, then we say such a test is not $\gamma$-robust. A natural question is to ask if the Neyman-Pearson test is distributionally robust for some $\gamma$. We show that the Neyman-Pearson test is not robust any $\gamma > 1$ by constructing $P,Q$ and a set of distributions $R_m$ such that $\lim_{m \to \infty} \HE(P,R_m) = 0$, but the Neyman-Pearson test outputs $\cH_1$ with high probability. We provide the proof in Section~\ref{app:np_counter}.
\begin{Lemma}
\label{lem:np_counter}
There exists two distributions $P$ and $Q$ such that $N^*_{1/3}(P, Q) = \Theta(1)$ and the
Neyman-Pearson test
is not robust for any $\gamma > 1$.
\end{Lemma}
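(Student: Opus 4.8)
The plan is to exhibit an explicit pair $P,Q$ that is trivial for the simple test yet on which the likelihood ratio is infinitely fragile to a vanishing perturbation. I would work on a discrete domain $\cX$ containing a distinguished point $s$, and choose $P,Q$ so that (i) $\HE(P,Q)$ is a positive constant and (ii) $P(s)=0<Q(s)$. A minimal choice is $\cX=\{0,1\}$ with $P$ the point mass at $0$ and $Q$ the uniform distribution, giving $\HE^2(P,Q)=1-1/\sqrt{2}$ and $s=1$. By the characterization $N^*_\delta(P,Q)=\Theta(\log(1/\delta)/\HE^2(P,Q))$ stated above, property (i) immediately yields $N^*_{1/3}(P,Q)=\Theta(1)$.

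Next I would define the perturbation. Let $R_m$ be the distribution that agrees with $(1-\epsilon_m)P$ on $\cX\setminus\{s\}$ and puts mass $\epsilon_m$ on $s$, where $\epsilon_m\downarrow 0$. Because $P(s)=0$, the cross term at $s$ vanishes and a one-line computation gives $\HE^2(P,R_m)=1-\sqrt{1-\epsilon_m}$, so $\lim_{m\to\infty} \HE(P,R_m)=0$. At the same time $R_m\to P$ in total variation, whence $\HE(Q,R_m)\to \HE(Q,P)>0$ by continuity, and therefore the ratio $\HE(Q,R_m)/\HE(P,R_m)\to\infty$.

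The conclusion then follows by fixing $\gamma>1$ arbitrarily. Since the above ratio diverges, for all sufficiently large $m$ we have $\gamma\cdot\HE(P,R_m)\le \HE(Q,R_m)$, i.e. $R_m$ satisfies $\cH_0$. Now comes the crux: since $P(s)=0$, the likelihood ratio $P(X^n)/Q(X^n)$ equals exactly $0$ whenever some sample equals $s$, and $0$ lies below every positive threshold; since any Neyman-Pearson test achieving error $1/3$ on the simple problem must use a positive threshold $t$ (a nonpositive threshold would always output $\cH_0$ and fail under $Q$), the test outputs $\cH_1$ on that event. Under $R_m$ the probability that at least one of the $n$ samples equals $s$ is $1-(1-\epsilon_m)^n\to 1$ as $n\to\infty$. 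Hence for this fixed $R_m$ satisfying $\cH_0$, the Neyman-Pearson test outputs $\cH_1$ with probability tending to $1$, so its error exceeds $1/2$ asymptotically and it is not $\gamma$-robust. As $\gamma$ was arbitrary, the claim holds.

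The only real subtlety I expect is the order of the two limits and making the argument threshold-free. Forcing $P(s)=0$ is what buys both: it decouples the size of the perturbation from the number of samples, so a single fixed $R_m$ already defeats the test as $n\to\infty$ regardless of the threshold $t$, and it keeps the Hellinger computation a clean one-liner. A variant with $P(s)>0$ tiny would also work but is messier: one would then need the $s$-mass to be comparable to $e^{-\Theta(n)}$ so that a single outlier sample drives the log-likelihood below $\log t$, which couples $\epsilon_m$ to $n$ and forces one to track the magnitude of $\log(P(s)/Q(s))$ against the threshold.
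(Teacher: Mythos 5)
Your proposal is correct and follows essentially the same route as the paper: the paper also takes $P=B(0)$, $Q=B(1/2)$ and the perturbation $R=B(\epsilon)$ (with the explicit choice $\epsilon = 1/(16\gamma^2)$ rather than a sequence $\epsilon_m \downarrow 0$), and exploits exactly the same mechanism that a single sample at the zero-probability point forces the likelihood ratio to $0$ and hence the output $\cH_1$. Your handling of the nonpositive-threshold edge case is in fact slightly more careful than the paper's ``for any finite threshold $t$'' phrasing, but this is a cosmetic refinement, not a different argument.
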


\subsection{Related works}

We overview robust hypothesis tests with different measures. Let $\cT$ denote the class of all hypothesis tests. For a pair of distributions $P,Q$, and test $T \in \cT$, let $P_e(T, P, Q)$ be the maximum of type I and type II errors of $T$ for distributions $P$ and $Q$. The problem of finding optimal robust hypothesis test can be formulated as
\[
\min_{T \in \cT} \max_{P' \in C(P), Q' \in C(Q)} P_e(T, P', Q'),
\] 
for some convex sets $C(P)$ and $C(Q)$. For tests with $n$ samples, $P_e(T, P', Q')$ is convex in both product spaces $(P')^n$ and $(Q')^n$ over $X^n$. Hence the above min-max problem is convex and the optimal test $T$ can be obtained by computing the least favorable distributions. However this approach can be computationally inefficient.

The first closed form estimator is due to ~\cite{huber1965robust}. They considered a Kolomogorov distance type metric and showed that a clipped log-likelihood test is optimal.  \cite{levy2008robust, gul2017minimax} studied robust distribution hypothesis testing with KL divergence, given by
\[
\KL(P, R) = \int_{x} P(x) \log \frac{P(x)}{R(x)} dx.
\]
However,  KL divergence is not symmetric and simple examples such as the one in Lemma~\ref{lem:np_counter} do not have small KL divergence to the underlying true distributions.

\cite{scheffe1947useful} proposed robust hypothesis test in total variation distance, given by 
\[
\TV(P, Q) = \frac{1}{2} \norm{P - Q}_1.
\]
For any two distributions $P$ and $Q$, Scheffe estimator uses $\cO\left( \frac{\log ({1}/{\delta})}{\TV^2(P,Q)}\right)$ samples
to obtain an error probability of at most $\delta$. It is easy to show that
\begin{equation}
\label{eq:hell-tv}
\frac{1}{2}\TV^2(P,Q) \leq \HE^2(P,Q) \leq \TV(P,Q).
\end{equation}
We provide a simple proof of ~\eqref{eq:hell-tv} in  Section~\ref{app:hell-tv}.
By~\eqref{eq:hell-tv}, the sample complexity of the Scheffe estimator can be worse than the sample complexity of the Neyman-Pearson test. Furthermore, if
the upper bound in~\eqref{eq:hell-tv} is tight, then the sample complexity of the Scheffe estimator can be much higher than the optimal sample complexity as stated in the next lemma. 
\begin{Lemma}
\label{lem:scheffe}
Let $N^S_\delta(P, Q)$ denote the sample complexity of the Scheffe test for simple hypothesis testing.
For any $K > 1$, 
there exists distributions $P,Q$ such that 
\[
N^S_{\delta}(P, Q) = \Omega \left( K \cdot N^*_{\delta}(P, Q) \right).
\]
\end{Lemma}
We relegate the proof of Lemma~\ref{lem:scheffe} in Section~\ref{app:scheffe}.
\section{Contributions}

\subsection{Test statistic}
We propose a test statistic that is distributionally robust for $\gamma > \frac{\sqrt{2}}{\sqrt{2}-1}$ and further has same sample complexity as that of the Neyman-Pearson test up to multiplicative factors. 
 
Since our goal is to come up with a test whose performance guarantee is independent of the underlying domain, it is desirable to have a test of the form $\sum^n_{i=1} f(X_i)$.

Hellinger distance involves a square-root term in its definition and hence finding a $f$ directly is difficult.  Hence, we approximate the Hellinger distance by the symmetric chi-squared statistic, given by
\[
\CS(P,Q) = \norm{\frac{P-Q}{\sqrt{P+Q}}}^2_2 = \int_{x} \frac{(P(x) - Q(x))^2}{P(x) + Q(x)} dx.
\]
The symmetric chi-squared statistic approximates the square of the Hellinger distance to a multiplicative factor of two:
\begin{equation}
\label{eq:chi_hell}
\frac{1}{4} \CS(P, Q) \leq \HE^2(P,Q) \leq \frac{1}{2} \CS(P, Q).
\end{equation}
We provide a derivation of~\eqref{eq:chi_hell} in  Section~\ref{app:chi-hell}. The
symmetric chi-square statistic can be  written as 
\begin{align}
\conf{&} \CS(P,Q) 
\conf{\nonumber \\} &=  \int_{x} \frac{(P(x) - Q(x))^2}{P(x) + Q(x)} dx   \nonumber \\
\arxiv{&=  \int_{x} \frac{P(x)(P(x) - Q(x))}{P(x) + Q(x)} dx -
 \int_{x} \frac{Q(x)(P(x) - Q(x))}{P(x) + Q(x)} dx  \nonumber \\}
& = \EE_{X \sim P} \left[ \frac{P(X) - Q(X)}{P(X) + Q(X)} \right]
- \EE_{X \sim Q} \left[ \frac{P(X) - Q(X)}{P(X) + Q(X)} \right]. 
\label{eq:diff}
\end{align}
~\eqref{eq:diff} motivates the following test statistic. Given $n$ samples $X^n$ from an
unknown distribution $R$, let
\[
\TE(P, Q, X^n) = \frac{1}{n} \sum^n_{i=1} \frac{P(X_i) - Q(X_i)}{P(X_i) + Q(X_i)}.
\] By~\eqref{eq:diff},
\[
\EE_{X^n \sim P}[\TE(P, Q, X^n)] - \EE_{X^n \sim Q}[\TE(P, Q, X^n)] =
\CS(P, Q).
\]
Furthermore,
\[
\EE_{X^n \sim P}[\TE(P, Q, X^n)] + \EE_{X^n \sim Q}[\TE(P, Q, X^n)] = 0.
\]
Hence,
\begin{align*}
\EE_{X^n \sim P}[\TE(P, Q, X^n)] 
& = - \EE_{X^n \sim Q}[\TE(P, Q, X^n)] \conf{\\&} = \frac{1}{2} \CS(P, Q).
\end{align*}
Hence, a natural test is to output $P$ if 
$
\TE(P, Q, X^n) > 0
$
and $Q$ if $\TE(P, Q, X^n) < 0$, while breaking ties randomly. 
We refer to this test as \textsc{HellingerTest}.
 \textsc{HellingerTest} does not have any tunable hyperparameters and just depends on the underlying distributions $P$ and $Q$. Instead of comparing to zero, one can compare to a threshold $t$ to get precise trade offs between type I and type II errors.

\subsection{Theoretical guarantees}
We show that 
\textsc{HellingerTest} is also robust to distribution perturbations in Hellinger distance and has the optimal sample complexity of simple hypothesis testing.
Thus \textsc{HellingerTest} guarantees robustness in Hellinger distance for free.
\begin{Theorem}
\label{thm:main}
 % For every $\gamma > \frac{\sqrt{2}}{\sqrt{2}-1}$, \textsc{HellingerTest} is $\gamma$-distributionally robust and the sample complexity is 
% same as that of the optimal sample complexity of the Neyman-Pearson test up to constants. More precisely, 
% For every $\gamma > $, \textsc{HellingerTest} is $\gamma$-distributionally robust. 
Let $N^{H}_\delta(P,Q,\gamma)$ be the sample complexity of the \textsc{HellingerTest}.
For $\gamma > \frac{\sqrt{2}}{\sqrt{2}-1}$ and any $\delta$,
\[
N^H_\delta(P, Q, \gamma) = \frac{c_\gamma \log ({1}/{\delta})}{\HE^2(P,Q)} = \Theta\left( N^*_\delta(P, Q)\right),
\]
where $c_\gamma$ is a constant that depends on $\gamma$.
% Furthermore, for any error probability $\delta$, the sample complexity of \textsc{HellingerTest} for $\gamma$-robust hypothesis testing is
% same as that of the optimal sample complexity of the simple hypothesis testing up to constants.
\end{Theorem}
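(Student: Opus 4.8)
The plan is to analyze the bounded random variable $g(X) := \frac{P(X)-Q(X)}{P(X)+Q(X)} \in [-1,1]$, for which $\TE(P,Q,X^n) = \frac1n\sum_{i=1}^n g(X_i)$. Writing $\mu_R := \EE_{X\sim R}[g(X)]$ and $\sigma_R^2 := \Var_{X\sim R}[g(X)]$, the entire argument reduces to two estimates under $\cH_0$ (the case $\cH_1$ being symmetric under $g \mapsto -g$): (i) a mean lower bound $\mu_R \ge c_1\HE^2(P,Q)$, and (ii) a variance bound $\sigma_R^2 \le \EE_R[g^2] \le c_2\HE^2(P,Q)$, with $c_1,c_2$ depending only on $\gamma$. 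Given these, since $|g|\le 1$, Bernstein's inequality gives $\Pr[\TE \le 0] \le \exp\!\big(-n\mu_R^2/(2\sigma_R^2 + \tfrac43\mu_R)\big)$, and substituting (i)--(ii) forces the exponent to be $\Omega_\gamma(n\,\HE^2(P,Q))$, so $n = c_\gamma\log(1/\delta)/\HE^2(P,Q)$ drives the error below $\delta$. The matching lower bound $N^H_\delta \ge N^*_\delta(P,Q)$ is immediate, since $R=P$ satisfies $\cH_0$ and $R=Q$ satisfies $\cH_1$, so robust testing subsumes simple testing and~\eqref{eq:sim_sam} applies; this yields the stated $\Theta(N^*_\delta(P,Q))$.

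The two estimates rest on facts coming from the hypothesis and from~\eqref{eq:chi_hell}. First, under $\cH_0$ the triangle inequality gives $\gamma\HE(P,R) \le \HE(Q,R) \le \HE(P,Q) + \HE(P,R)$, hence $\HE(P,R) \le \HE(P,Q)/(\gamma-1)$: the slackness forces $R$ to be close to $P$, with closeness controlled by $\HE(P,Q)$ itself. Second, the identity established before the theorem gives $\mu_P = \tfrac12\CS(P,Q)$, so~\eqref{eq:chi_hell} yields $\HE^2(P,Q) \le \mu_P \le 2\HE^2(P,Q)$, and $\EE_P[g^2] \le \CS(P,Q) \le 4\HE^2(P,Q)$ (using $P/(P+Q)\le 1$). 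Thus the anchor values at $R=P$ are already of the right order, and the task is to show a perturbation $R \neq P$ cannot destroy them.

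For both estimates I would control the perturbation by splitting $R-P = (\sqrt R - \sqrt P)(\sqrt R + \sqrt P)$ and applying Cauchy--Schwarz, using the smallness of $\HE(P,R)$ as the lever. For the mean, $\mu_R = \mu_P + \int (R-P)\,g$ and
\[
\Big|\int (R-P)\,g\Big| \le \norm{\sqrt R - \sqrt P}_2 \cdot \Big(\int (\sqrt R + \sqrt P)^2 g^2\Big)^{1/2} \le \sqrt2\,\HE(P,R)\,\big(\sqrt{\EE_R[g^2]} + \sqrt{\EE_P[g^2]}\big),
\]
where the last step uses $\int(\sqrt R + \sqrt P)^2 g^2 \le (\sqrt{\EE_R[g^2]}+\sqrt{\EE_P[g^2]})^2$ by Cauchy--Schwarz on the cross term. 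The same splitting applied to $g^2$ (with $g^4\le g^2$) gives the self-bounding inequality $\sqrt{\EE_R[g^2]} \le \sqrt{\EE_P[g^2]} + \sqrt2\,\HE(P,R)$, which simultaneously proves estimate (ii) and feeds back to bound the mean perturbation by $O_\gamma\!\big(\HE^2(P,Q)/(\gamma-1)\big)$. Subtracting this from $\mu_P \ge \HE^2(P,Q)$ keeps $\mu_R$ positive and of order $\HE^2(P,Q)$ once $\gamma$ is large enough.

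The main obstacle is quantitative: establishing robustness at the \emph{sharp} constant $\gamma > \frac{\sqrt2}{\sqrt2-1}$ rather than merely for $\gamma$ large. This demands optimizing the constants above with no slack---in particular keeping the Cauchy--Schwarz form $(\sqrt{\EE_R[g^2]}+\sqrt{\EE_P[g^2]})^2$ instead of the lossy $(\sqrt R + \sqrt P)^2 \le 2(R+P)$, and, crucially, tracking the covariation between the two tight regimes of~\eqref{eq:chi_hell}: when $\EE_P[g^2]$ is near its extreme $\CS(P,Q)$ the value $\mu_P=\tfrac12\CS(P,Q)$ is also near its \emph{maximum} $2\HE^2(P,Q)$, so the worst cases for numerator and denominator cannot occur together. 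Parametrizing by $\CS(P,Q)/\HE^2(P,Q)$ and carrying the $\sqrt2$ factors of the Hellinger normalization through this optimization is precisely what produces the threshold $\frac{\sqrt2}{\sqrt2-1} = 2+\sqrt2$.
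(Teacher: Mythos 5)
Your high-level architecture (mean lower bound plus second-moment bound plus Bernstein, with the trivial reduction $R\in\{P,Q\}$ for the matching lower bound) is the same as the paper's, but your mean estimate does not reach the stated threshold, and this is a genuine gap rather than bookkeeping. Follow your own chain under $\cH_0$, writing $H=\HE(P,Q)$, $h=\HE(P,R)$, $g=(P-Q)/(P+Q)$, $\mu_R=\EE_R[g]$: you have $\mu_P=\tfrac12\CS(P,Q)\ge H^2$, $\EE_P[g^2]\le\CS(P,Q)=2\mu_P$, $\sqrt{\EE_R[g^2]}\le\sqrt{\EE_P[g^2]}+\sqrt2\,h$, and $h\le H/(\gamma-1)$, so the best your estimates give is
\[
\mu_R \;\ge\; \mu_P - \sqrt2\,h\left(2\sqrt{\EE_P[g^2]}+\sqrt2\,h\right) \;\ge\; \mu_P - 4h\sqrt{\mu_P} - 2h^2 \;\ge\; H^2\left(1-\frac{4}{\gamma-1}-\frac{2}{(\gamma-1)^2}\right),
\]
which is positive only when $\gamma-1>2+\sqrt6$, i.e.\ $\gamma>3+\sqrt6\approx 5.45$, well above $\frac{\sqrt2}{\sqrt2-1}=2+\sqrt2\approx 3.41$. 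Moreover, the covariation you invoke in your last paragraph to close this gap is backwards. Take $Q$ uniform on $[0,1]$ and $P=(1-\epsilon)Q+\epsilon\,\mathrm{Unif}[1,2]$: as $\epsilon\to0$ one gets $\CS(P,Q)\to 2H^2$, so $\mu_P\to H^2$ is at its \emph{minimum}, while simultaneously $\EE_P[g^2]\to\CS(P,Q)$, i.e.\ the bound $\EE_P[g^2]\le 2\mu_P$ is tight at the same time. So the two worst cases co-occur, and no reparametrization by $\CS(P,Q)/\HE^2(P,Q)$ rescues the anchor-at-$P$ plus Cauchy--Schwarz scheme at the sharp constant. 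As written, your argument proves a weaker theorem (robustness for $\gamma$ large), not Theorem~\ref{thm:main}.

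The missing idea is how the paper's Lemma~\ref{lem:mean} avoids anchoring at $\mu_P$ altogether. It uses the exact algebra $2R(P-Q)=(Q-R)^2-(P-R)^2+P^2-Q^2$, so that $2\mu_R=\int\frac{(Q-R)^2-(P-R)^2}{P+Q}$, together with the pointwise identity
\[
\frac{(q-r)^2-(p-r)^2}{p+q} \;=\; \frac{(q-r)^2}{q+r}-\frac{(p-r)^2}{p+r}+\frac{2r(q-r)(p-r)(p-q)}{(p+q)(q+r)(p+r)},
\]
whose error term is bounded by Cauchy--Schwarz to give $2\mu_R\ge\CS(Q,R)-\CS(P,R)-2\sqrt{\CS(Q,R)\CS(P,R)}$. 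Feeding in~\eqref{eq:chi_hell} and parametrizing directly by the ratio $\gamma=\HE(Q,R)/\HE(P,R)$ (no triangle inequality is needed for positivity) yields $\mu_R\ge\HE^2(Q,R)\bigl(2-(\sqrt2/\gamma+1)^2\bigr)$, which is positive exactly when $\gamma>\frac{\sqrt2}{\sqrt2-1}$; the triangle inequality $\HE(P,Q)\le(\gamma+1)\HE(Q,R)$ enters only afterwards, to convert the sample complexity into $\HE^2(P,Q)$, where constant loss is harmless. Your premature conversion $h\le H/(\gamma-1)$ inside the positivity argument, and the perturbative decomposition $\mu_R=\mu_P+\int(R-P)g$, are precisely the two lossy steps the paper's identity circumvents.
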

We also show a lower-bound on the performance of the \textsc{HellingerTest}. 
%Due to space constraints, the proof is relegated to Appendix~\ref{app:zero_mean}.
\begin{Theorem}
\label{thm:zero_mean}
For every $\gamma < \frac{1}{\sqrt{2}-1}$, there exists  distributions $P$, $Q$, and $R$
such that $\frac{\HE(Q,R)}{\HE(P,R)} \geq \gamma$ and
\[
\EE_{X^n \sim R}[T(P, Q, X^n)] = 0.
\]
\end{Theorem}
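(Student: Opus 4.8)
The plan is to exhibit an explicit binary family of distributions on $\cX = \{0,1\}$ for which the per-sample statistic has mean zero under $R$, yet $R$ is much closer to $P$ than to $Q$ in Hellinger distance. Fix $p \in (0,1)$ and set $P = (p,\,1-p)$, $Q = (0,\,1)$, and $R = (p/2,\,1-p/2)$; that is, $R$ is the coordinatewise average of $P$ and $Q$. The key idea is that averaging $P$ and $Q$ makes the signed statistic $g(x) := \frac{P(x)-Q(x)}{P(x)+Q(x)}$ integrate to zero against $R$, while taking $Q$ to be a point mass and letting $p \to 0$ pushes the ratio of Hellinger distances to the extremal value $\frac{1}{\sqrt 2 - 1}$.

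First I would verify the zero-mean condition. Since $g(0) = 1$ and $g(1) = \frac{-p}{2-p}$, a direct computation gives $\EE_{X \sim R}[g(X)] = \frac{p}{2}\cdot 1 + \bigl(1 - \tfrac p2\bigr)\cdot \frac{-p}{2-p} = 0$. Because $T(P,Q,X^n) = \frac1n\sum_{i=1}^n g(X_i)$, linearity of expectation immediately yields $\EE_{X^n \sim R}[T(P,Q,X^n)] = \EE_{X \sim R}[g(X)] = 0$, which is the second claim of the theorem and holds for every $p \in (0,1)$.

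Next I would compute the two Hellinger distances using $\HE^2(A,B) = 1 - \sum_x \sqrt{A(x)B(x)}$. This gives $\HE^2(Q,R) = 1 - \sqrt{1 - p/2}$ and $\HE^2(P,R) = 1 - \frac{p}{\sqrt 2} - \sqrt{(1-p)(1-p/2)}$. Expanding both to first order in $p$ as $p \to 0$ yields $\HE^2(Q,R) = \frac p4 + O(p^2)$ and $\HE^2(P,R) = \bigl(\tfrac34 - \tfrac{1}{\sqrt 2}\bigr) p + O(p^2)$. The main (and essentially only) technical step is this expansion together with the simplification $\tfrac34 - \tfrac1{\sqrt2} = \frac{3 - 2\sqrt 2}{4} = \frac{(\sqrt 2 - 1)^2}{4}$, which is what produces the clean constant. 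Taking the ratio,
\[
\lim_{p \to 0^+}\frac{\HE^2(Q,R)}{\HE^2(P,R)} = \frac{1/4}{(\sqrt 2 - 1)^2/4} = \frac{1}{(\sqrt 2 - 1)^2},
\]
so that $\HE(Q,R)/\HE(P,R) \to \frac{1}{\sqrt 2 - 1}$ as $p \to 0^+$.

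Finally, fixing any $\gamma < \frac{1}{\sqrt 2 - 1}$, continuity of the ratio in $p$ (both distances are strictly positive for $p \in (0,1)$) guarantees a small enough $p$ with $\HE(Q,R)/\HE(P,R) \geq \gamma$, while the zero-mean identity holds for that same $p$; this gives the required triple $(P,Q,R)$. The only place that requires care is the asymptotic expansion, and in particular checking that the $O(p^2)$ terms do not affect the limiting ratio, which follows since both numerator and denominator have a nonzero linear term in $p$.
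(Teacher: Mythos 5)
Your proposal is correct and is essentially the paper's own proof: your triple $P=(p,1-p)$, $Q=(0,1)$, $R=(p/2,1-p/2)$ is exactly the paper's $P=B(2\epsilon)$, $Q=B(0)$, $R=B(\epsilon)$ with $p=2\epsilon$ (coordinates relabeled), and both arguments verify the zero-mean identity directly and then show the Hellinger ratio tends to $\frac{1}{(\sqrt{2}-1)^2}$ as the parameter goes to $0$. The only cosmetic difference is that you evaluate the limit by a first-order Taylor expansion while the paper invokes L'H\^opital's rule; both are valid and yield the same constant.
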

% \begin{figure}[t]
% \floatbox[{\capbeside\thisfloatsetup{capbesideposition={left,center},capbesidewidth=7cm}}]{figure}[\FBwidth]
% {\caption{Illustration for Bernoulli distributions $P = B(1/4)$ and $Q= B(3/4)$. The black line plots $\HE(P,R) = \HE(Q,R)$. \textsc{HellingerTest} is robust to changes in distributions in the blue region and not robust in the orange region.}
% \label{fig:hellinger}}
% {\includegraphics[scale=0.25]{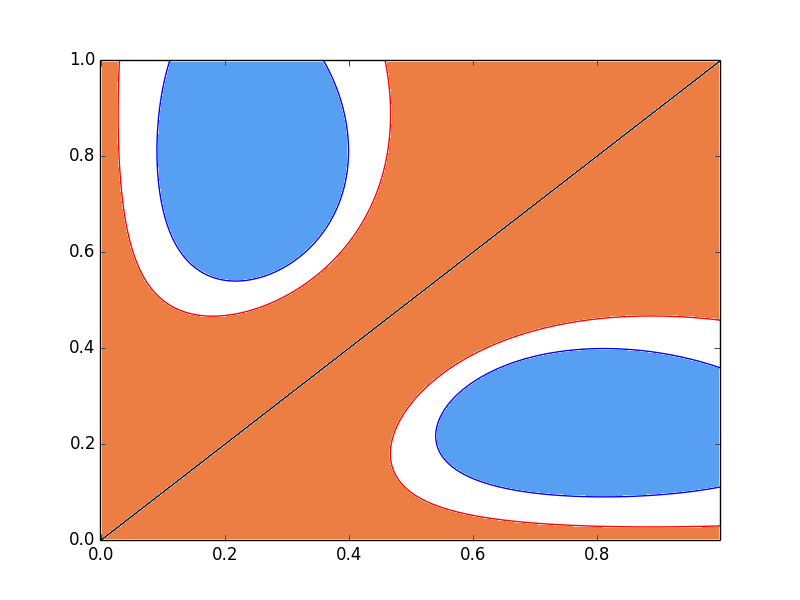}}
% \end{figure} 
\begin{figure}[t]
\caption{Illustration for Bernoulli distributions $P = B(1/4)$ and $Q= B(3/4)$. The black line plots $\HE(P,R) = \HE(Q,R)$. \textsc{HellingerTest} is robust to changes in distributions in the blue region and not robust in the orange region.}
\label{fig:hellinger}
{\includegraphics[scale=0.25]{}}
\end{figure} 
 The results are illustrated in Figure~\ref{fig:hellinger} for Bernoulli distributions.
Bridging the $\sqrt{2}$-gap between constants in Theorems~\ref{thm:main} and ~\ref{thm:zero_mean} is an interesting future direction.

\begin{figure*}[t]
\centering
  \caption{Comparision of Neyman-Pearson and \textsc{HellingerTest} for different distributions.}
\label{fig:normal}
 \begin{subfigure}[t]{.3\linewidth}
    \centering\includegraphics[width=1.0\linewidth]{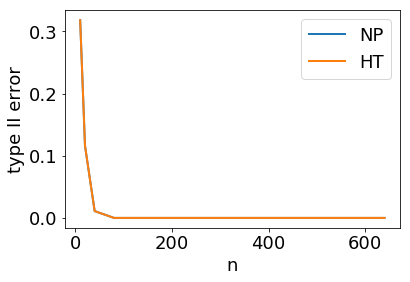}
    \caption{$P=B(0)$, $Q=B(0.1)$.}
  \end{subfigure}
  \begin{subfigure}[t]{.3\linewidth}
    \centering\includegraphics[width=1.0\linewidth]{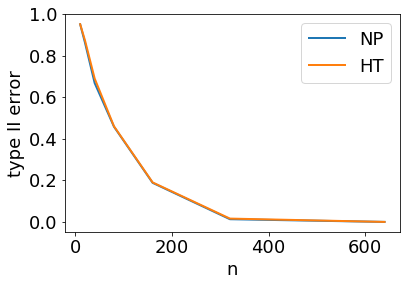}
    \caption{$P=B(0.5)$, $Q=B(0.6)$.}
  \end{subfigure}
  \begin{subfigure}[t]{.34\linewidth}
    \centering\includegraphics[width=0.9\linewidth]{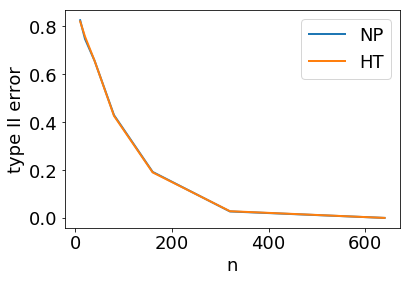}
    \caption{$P=N(0, 1)$, $Q=N(0.2, 1)$.}
  \end{subfigure}
\end{figure*}

\subsection{Implications for distribution estimation}

Distribution robust hypothesis testing can be rewritten as a test that given pair of distributions $P,Q$ and samples from an unknown distribution $R$, finds a distribution $\hat{R} \in \{P, Q\}$ such that $\HE(R, \hat{R}) \leq \gamma \cdot \min(\HE(P, R), \HE(Q, R))$. 

Such a distribution robust hypothesis testing can be used as a subroutine for learning distributions. Consider the following learning problem: given $n$ samples $X^n \sim P \in \cP$,  find an estimate $\hat{P}$ such that $\HE(P, \hat{P}) \leq \epsilon$. 

A natural algorithm is to obtain an $\epsilon$-cover of $\cP$, denoted by $\cP_\epsilon$ and run the robust hypothesis test between every pair of distributions and output the distribution that wins in the maximum number of tests~\citep{devroye2012combinatorial}. It can be shown that the overall algorithm selects a distribution that is at most $c \cdot \gamma\cdot \epsilon$ away from the true distribution, where $c$ is a constant. We refer readers to \citep[Section 6.8]{devroye2012combinatorial} for a detailed description of this algorithm. The above algorithm can be further modified 
with improved run time~\citep{acharya2014sorting, acharya2018maximum} and to provide differential privacy~\citep{bun2019private}.

Perhaps the most popular estimator is the Scheffe test, which studied the problem under the total variation distance~\citep{scheffe1947useful, yatracos1985rates}. Scheffe test has been used in variety of works including learning Gaussian mixtures~\citep{daskalakis2014faster, suresh2014near, ashtiani2018nearly}, $k$-modal distributions~\citep{daskalakis2012learning}, log-concave distributions~\citep{diakonikolas2016learning}, and piece-wise polynimial distributions~\citep{chan2014efficient}. 

The proposed test can be used in place of the Scheffe's estimator in the above papers to obtain learning guarantees in the Hellinger distance.

\subsection{Modifications for differential privacy}

Differential privacy has become the standardized notion of privacy in statistics. We refer readers to \citep{CynthiaA14} for details on differential privacy. Optimal hypothesis test with differential privacy was proposed by~\cite{canonne2019structure}. 
Let 
\[
\Delta(P, Q) = \max_{x \in \cX}  \frac{|P(x) - Q(x)|}{P(x) + Q(x)}  \leq 1.
\]
Changing one sample changes the proposed test statistic $T(P, Q, X^n)$ by at most
$2\Delta(P, Q)/n$, hence \textsc{HellingerTest} can be modified to a $\epsilon$-differentially private test by adding Laplace noise,
\[
T_\epsilon (P, Q, X^n) = T(P, Q, X^n) + \frac{Z}{n},
\]
where $Z$ is a Laplace random variable with parameter $2\Delta(P, Q)/\epsilon$. While this algorithm is not optimal in general, as we show below, it is optimal for $\epsilon > 1$ and further has the advantage that it is parameter-free and simple to use. 

\begin{Corollary}
\label{cor:dp}
$T_\epsilon (P, Q, X^n)$ is an $\epsilon$-DP algorithm. Furthermore, its sample complexity is optimal and is same as that of the non-private  complexity up to constants for 
\[
\epsilon \geq \max_{x \in \cX}  \frac{|P(x) - Q(x)|}{P(x) + Q(x)}.
\]
If $\Delta(P,Q)$ is unknown, instead of adding Laplace noise with parameter $2\Delta(P, Q)/n$, one can add Laplace noise with parameter $2/n$ and it would be near-optimal for  $\epsilon \geq 1$ for all $P,Q$.
\end{Corollary}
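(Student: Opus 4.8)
The plan is to split Corollary~\ref{cor:dp} into the privacy claim and the two utility claims, each of which reduces to a standard sensitivity-plus-tail-bound argument. \textbf{Privacy.} First I would bound the $\ell_1$-sensitivity of $T(P,Q,X^n)$. Replacing a single coordinate $X_i$ by an arbitrary $X_i'$ alters only the $i$-th summand $\frac{P(X_i)-Q(X_i)}{P(X_i)+Q(X_i)}$, and by the definition of $\Delta(P,Q)$ every summand lies in $[-\Delta(P,Q),\Delta(P,Q)]$; hence the change in $T$ is at most $2\Delta(P,Q)/n$. Since $Z/n$ is Laplace with scale $2\Delta(P,Q)/(n\epsilon)$, which is exactly the sensitivity divided by $\epsilon$, the Laplace mechanism immediately gives that $T_\epsilon$ is $\epsilon$-DP.

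\textbf{Utility for known $\Delta$.} Here the idea is to treat the Laplace perturbation as a second, independent source of error and to show it is dominated by the signal whenever $\epsilon\geq\Delta(P,Q)$. By the analysis underlying Theorem~\ref{thm:main}, the value of $T$ under the robust hypotheses is separated from the threshold by a margin of order $\CS(P,Q)$, and the sampling fluctuation of $T$ already concentrates within this margin once $n=\Theta(\log(1/\delta)/\HE^2(P,Q))$; here I use $\CS(P,Q)\geq 2\HE^2(P,Q)$ from \eqref{eq:chi_hell}. It therefore suffices to control the extra term $Z/n$. The Laplace tail bound gives $\Pr[\,|Z/n|>s\,]=\exp(-sn\epsilon/(2\Delta(P,Q)))$, and choosing $s$ to be a constant fraction of the margin shows that $n=\Omega\!\left(\frac{\Delta(P,Q)}{\epsilon}\cdot\frac{\log(1/\delta)}{\HE^2(P,Q)}\right)$ keeps the noise below the margin with probability $1-\delta$. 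A union bound over the sampling and noise events then yields total error $\delta$. When $\epsilon\geq\Delta(P,Q)$ the prefactor $\Delta(P,Q)/\epsilon\leq 1$, so the noise requirement is absorbed into the sampling requirement and the sample complexity stays $\Theta(\log(1/\delta)/\HE^2(P,Q))=\Theta(N^*_\delta(P,Q))$, matching \eqref{eq:sim_sam}.

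\textbf{Utility for unknown $\Delta$.} When $\Delta(P,Q)$ is unknown I would use the universal bound $\Delta(P,Q)\leq 1$. Adding Laplace noise of scale $2/(n\epsilon)$ injects at least as much noise as the tight mechanism, so it remains $\epsilon$-DP for every $P,Q$ (the Laplace mechanism only requires the noise scale to be at least the sensitivity over $\epsilon$). Repeating the tail argument with $\Delta(P,Q)$ replaced by $1$ gives the requirement $n=\Omega(\log(1/\delta)/(\epsilon\,\HE^2(P,Q)))$, which collapses to the optimal rate up to constants as soon as $\epsilon\geq 1$, uniformly over $P,Q$.

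I expect the main obstacle to be the utility step rather than the privacy step: one must verify that adding the Laplace noise does not erode the separation margin that Theorem~\ref{thm:main} establishes under the robust hypotheses, and that the two error sources compose cleanly so that the constant $c_\gamma$ of Theorem~\ref{thm:main} and the new noise constant combine without inflating the rate. The condition $\epsilon\geq\Delta(P,Q)$ (respectively $\epsilon\geq 1$) is precisely what guarantees that the Laplace fluctuation is of the same order as, or smaller than, the sampling fluctuation, so that privacy is obtained essentially for free.
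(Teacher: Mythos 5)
Your proposal is correct and follows essentially the same route as the paper: the privacy claim via the $2\Delta(P,Q)/n$ sensitivity bound and the standard Laplace mechanism, and the utility claims via a union bound combining the Bernstein concentration of $T$ (with margin $\Omega(\HE^2(P,Q))$, using \eqref{eq:chi_hell}) with the Laplace tail bound, where the condition $\epsilon \geq \Delta(P,Q)$ (respectively $\epsilon \geq 1$ with the universal bound $\Delta(P,Q)\leq 1$) makes the noise term $O(\log(1/\delta)/n)$ and hence absorbed into the non-private sample complexity. The paper's proof is written for the simple case $R=P$ and treats the robust case implicitly, but this is only a difference of presentation, not of substance.
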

We relegate the proof to Section~\ref{app:dp}.
The above algorithm is amenable to the same clipping strategy proposed by \cite{canonne2019structure}
and can be modified to obtain the optimal sample complexity with differential privacy.

\subsection{Implications for other measures}
If $\HE(P,R) \leq \frac{1}{\gamma + 1} \HE(P,Q)$ ,
then by the triangle inequality,
\begin{align*}
\HE(Q,R) \conf{&} \geq \HE(P, Q) - \HE(P, R) \conf{\\&} \geq (\gamma + 1 - 1) \HE(P,R) \conf{\\&} =  \gamma \HE(P,R),
\end{align*}
Similarly, if $\HE(Q,R) \leq \frac{1}{\gamma + 1} \HE(P,Q)$,
then $\HE(P,R) \geq \gamma \HE(Q,R)$. Hence, if there is $\gamma$-robust hypothesis test, it can also differentiate between the following two hypotheses:
\begin{align*}
    \cH_0  : \HE(P, R) \leq \frac{1}{\gamma + 1} \HE(P,Q) 
    \\ 
    \cH_1  : \HE(Q, R) \leq \frac{1}{\gamma + 1} \HE(P,Q),
\end{align*}
such that the sample complexity is same as that of the Neyman-Pearson test. Furthermore if there is a measure $d$ such that Hellinger distance is upper bounded by some function of $d$, then the test works for even that class of distributions. This observation yields the following corollary.
\begin{Corollary}
\label{cor:diff_measures}
Let $\beta < \frac{\sqrt{2}-1}{2\sqrt{2}-1}$.
\textsc{HellingerTest} has the same complexity as the optimal simple hypothesis testing for the following composite hypothesis testing scenarios:
\begin{enumerate}
\item Hellinger distance:
\begin{align*}
    \cH_0  : \HE(P, R) \leq \beta \HE(P,Q) 
\\  \cH_1  : \HE(Q, R) \leq \beta \HE(P,Q).
\end{align*}
\item Total variation distance:
\begin{align*}
    \cH_0 : \TV(P, R) \leq \beta^2 \HE^2(P,Q) \\ 
    \cH_1 : \TV(Q, R) \leq \beta^2 \HE^2(P,Q).
\end{align*}
\item KL distance $\KL(\cdot, R)$:
\begin{align*}
    \cH_0  : \KL(P, R) \leq 2\beta^2\HE^2(P,Q)   \\  
    \cH_1 : \KL(Q, R) \leq 2\beta^2\HE^2(P,Q).
\end{align*}
\item KL distance $\KL(R, \cdot)$:
\begin{align*}
    \cH_0  : \KL(R, P) \leq 2\beta^2\HE^2(P,Q)   \\  
    \cH_1 : \KL(R, Q) \leq 2\beta^2\HE^2(P,Q).
\end{align*}

\end{enumerate}
\end{Corollary}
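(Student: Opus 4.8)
The plan is to reduce all four cases to the Hellinger-distance case (Part~1), which in turn follows directly from Theorem~\ref{thm:main}, and then to dispatch the remaining cases through one-sided comparison inequalities between each divergence and the squared Hellinger distance. First I would establish Part~1. Setting $\gamma = \frac{1-\beta}{\beta}$ (equivalently $\beta = \frac{1}{\gamma+1}$), the hypothesis bound $\beta < \frac{\sqrt2-1}{2\sqrt2-1}$ is, after a short rationalization, exactly equivalent to $\gamma > \frac{\sqrt2}{\sqrt2-1}$; indeed both thresholds equal $\frac{3-\sqrt2}{7}$ and $2+\sqrt2$ respectively under the map $\gamma \mapsto \frac{1}{\gamma+1}$. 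Hence Theorem~\ref{thm:main} applies verbatim and \textsc{HellingerTest} is $\gamma$-robust with the optimal sample complexity. It then remains to check that the composite hypotheses of Part~1 are implied by the $\gamma$-robust hypotheses, which is precisely the triangle-inequality computation already carried out in the paragraph preceding the corollary: if $\HE(P,R) \le \beta\,\HE(P,Q)$ then $\HE(Q,R) \ge \HE(P,Q) - \HE(P,R) \ge \gamma\,\HE(P,R)$, placing us in $\cH_0$ of the $\gamma$-robust test, and symmetrically for $\cH_1$.

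Next I would treat Parts~2--4 as corollaries of Part~1. For total variation, the upper bound $\HE^2(P,R) \le \TV(P,R)$ from~\eqref{eq:hell-tv} shows that $\TV(P,R) \le \beta^2\,\HE^2(P,Q)$ forces $\HE^2(P,R) \le \beta^2\,\HE^2(P,Q)$, i.e.\ $\HE(P,R) \le \beta\,\HE(P,Q)$, which is exactly the Part~1 hypothesis; the same reasoning applies with $P$ replaced by $Q$. For the two KL cases I would first supply the standard bound $\HE^2(A,B) \le \frac12\KL(A,B)$, valid for any ordered pair, obtained by inserting the pointwise inequality $\sqrt{x} \ge 1 + \frac12\log x$ (whose difference attains its unique minimum, value zero, at $x=1$) into $\HE^2(A,B) = 1 - \EE_A[\sqrt{B/A}]$. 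Applying this with $(A,B)=(P,R)$ yields Part~3, while applying it with $(A,B)=(R,P)$ together with the symmetry $\HE(R,P)=\HE(P,R)$ yields Part~4; in both cases the factor of two in the hypothesis bound $2\beta^2\,\HE^2(P,Q)$ is absorbed by the $\frac12$ in the comparison inequality, again reducing to $\HE(P,R)\le\beta\,\HE(P,Q)$.

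Because the sample complexity is inherited directly from Theorem~\ref{thm:main} as soon as the hypotheses collapse to Part~1, there is no substantive analytic obstacle here; the corollary is essentially the single unifying observation that any divergence dominating $\HE^2$ from above yields a robust test for free. The only points needing genuine care are (i) verifying that the stated threshold $\frac{\sqrt2-1}{2\sqrt2-1}$ is the exact image of $\frac{\sqrt2}{\sqrt2-1}$ under $\gamma\mapsto\frac{1}{\gamma+1}$, so that the constant is tight rather than merely sufficient, and (ii) furnishing the $\HE^2 \le \frac12\KL$ inequality, which the excerpt gives analogues of for the $\HE$--$\TV$ and $\CS$--$\HE$ comparisons but has not previously stated for KL. Both are routine, so the bulk of the work is bookkeeping the constants through the three reductions.
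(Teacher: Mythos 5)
Your proposal is correct and takes essentially the same route as the paper, which justifies the corollary exactly this way: the triangle-inequality reduction in the paragraph preceding the statement (with $\beta = \frac{1}{\gamma+1}$, so that $\beta < \frac{\sqrt{2}-1}{2\sqrt{2}-1}$ corresponds precisely to $\gamma > \frac{\sqrt{2}}{\sqrt{2}-1}$ from Theorem~\ref{thm:main}), followed by domination of $\HE^2$ by $\TV$ (equation~\eqref{eq:hell-tv}) and by $\frac{1}{2}\KL$ in both argument orders. The only blemish is a phrasing slip --- the Part-1 hypotheses \emph{imply} the $\gamma$-robust hypotheses rather than being ``implied by'' them, though your displayed computation runs the implication in the correct direction --- and your explicit derivation of $\HE^2(A,B) \leq \frac{1}{2}\KL(A,B)$ via the pointwise bound $\sqrt{x} \geq 1 + \frac{1}{2}\log x$ correctly supplies the one ingredient the paper leaves implicit.
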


\section{Experiments}

We first evaluate Neyman-Pearson test and \textsc{HellingerTest} on few canonical distributions without distribution perturbations and demonstrate that they have similar performance. 
For these experiments, we set the threshold $t$ such that the type I error is at most $0.05$. The results are in Figure~\ref{fig:normal}. The experiments are averaged over $1000$ trials for statistical consistency. The behavior of Neyman-Pearson and \textsc{HellingerTest} are similar. 

We then evaluate the effect of robustness for Gaussian distributions and Bernoulli distributions in Figures~\ref{fig:robust_gauss} and~\ref{fig:robust_bern} respectively.
The experiments demonstrate that  \textsc{HellingerTest}  is robust to distribution perturbations, where as the Neyman-Pearson test is not.

\begin{figure}[t]
\centering
  \caption{Comparision of Neyman-Pearson and \textsc{HellingerTest} with perturbed Gaussian distributions. $P = N(0,1)$, $Q=N(0.2, 1)$, and $R = (1-w) P + w N(100, 1)$.}
\label{fig:robust_gauss}
\centering
 \begin{subfigure}[t]{.47\linewidth}
    \centering\includegraphics[width=1.0\linewidth]{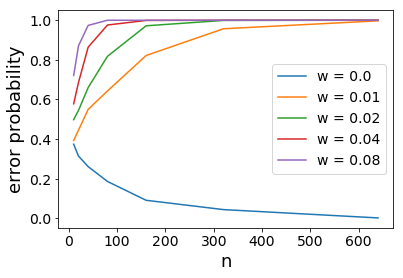}
    \caption{Neyman-Pearson test.}
  \end{subfigure}
  \begin{subfigure}[t]{.47\linewidth}
    \centering\includegraphics[width=1.0\linewidth]{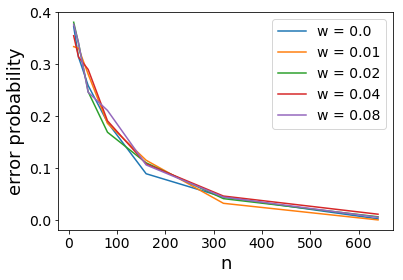}
    \caption{ \textsc{HellingerTest}.}
  \end{subfigure}
\end{figure}
\begin{figure}
\centering
  \caption{Comparision of Neyman-Pearson and \textsc{HellingerTest} with perturbed Bernoulli distributions. $P = B(0)$, $Q=B(0.1)$, and $R = B(r)$.}
\label{fig:robust_bern}
\centering
 \begin{subfigure}[t]{.47\linewidth}
    \centering\includegraphics[width=1.0\linewidth]{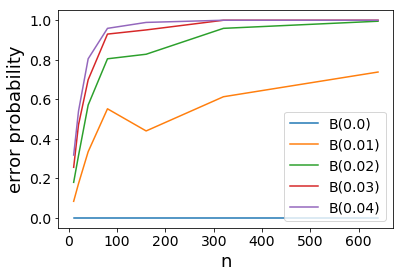}
    \caption{Neyman-Pearson test.}
  \end{subfigure}
  \begin{subfigure}[t]{.47\linewidth}
    \centering\includegraphics[width=1.0\linewidth]{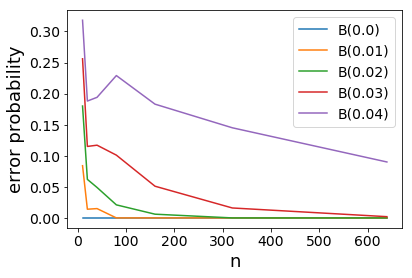}
    \caption{ \textsc{HellingerTest}.}
  \end{subfigure}
\end{figure}
\section{Proof of Theorem~\ref{thm:main}}

The analysis of the test statistic involves computing the variance and the expectation and using the Bernstein inequality. The next lemma bounds the variance in terms of Hellinger distance. 
\begin{Lemma}
\label{lem:var}
For any two distributions $P$ and $Q$, if $X^n \sim R$, then 
\[
\Var(\TE(P, Q, X^n)) \leq \frac{55}{n} \max \left(\HE^2(P, R), \HE^2(Q,R) \right).
\]
\end{Lemma}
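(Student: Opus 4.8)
The plan is to reduce the variance bound to a single-sample second-moment estimate and then control that estimate by the Hellinger integrands. Since $X_1,\dots,X_n$ are i.i.d.\ from $R$ and $\TE(P,Q,X^n)$ is their empirical average of $g(X_i) \triangleq \frac{P(X_i)-Q(X_i)}{P(X_i)+Q(X_i)}$, I would first write $\Var(\TE(P,Q,X^n)) = \tfrac1n \Var_{X\sim R}(g(X)) \le \tfrac1n \EE_{X\sim R}[g(X)^2]$, so that it suffices to prove
\[
\EE_{X \sim R}[g(X)^2] = \int_x R(x)\,\frac{(P(x)-Q(x))^2}{(P(x)+Q(x))^2}\,dx \le 55 \max\!\left(\HE^2(P,R),\HE^2(Q,R)\right).
\]

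Two elementary pointwise inequalities drive the rest. First, factoring $(P-Q)^2 = (\sqrt P - \sqrt Q)^2(\sqrt P + \sqrt Q)^2$ and using $(\sqrt P + \sqrt Q)^2 = P + Q + 2\sqrt{PQ} \le 2(P+Q)$ gives $\frac{(P-Q)^2}{(P+Q)^2} \le \frac{2(\sqrt P - \sqrt Q)^2}{P+Q}$. Second, the triangle inequality together with $(a+b)^2 \le 2a^2 + 2b^2$ yields $(\sqrt P - \sqrt Q)^2 \le 2(\sqrt P - \sqrt R)^2 + 2(\sqrt Q - \sqrt R)^2$, whose integral against $dx$ is exactly $4\HE^2(P,R) + 4\HE^2(Q,R)$. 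Together these suggest bounding the integrand by a constant multiple of $(\sqrt P - \sqrt R)^2 + (\sqrt Q - \sqrt R)^2$, after which the right-hand side collapses into Hellinger distances.

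The obstacle is that the factor $\frac{R}{P+Q}$ is unbounded: where $R$ places far more mass than $P+Q$, one cannot pull $\frac{R}{P+Q}$ out as a constant. I would resolve this by splitting $\cX$ at a threshold $\lambda > 1$. On $\{R \le \lambda(P+Q)\}$ the two inequalities above apply directly, since $\frac{2R}{P+Q}\le 2\lambda$, contributing at most $8\lambda\,(\HE^2(P,R) + \HE^2(Q,R))$. On $\{R > \lambda(P+Q)\}$ I would instead use the trivial bound $g^2 \le 1$ and observe that $R > \lambda P$ forces $(\sqrt R - \sqrt P)^2 \ge \tfrac{(\sqrt\lambda - 1)^2}{\lambda}\,R$ (and symmetrically for $Q$); hence the mass of $R$ in this region is itself controlled by the Hellinger integrands, contributing at most $\tfrac{\lambda}{(\sqrt\lambda-1)^2}(\HE^2(P,R)+\HE^2(Q,R))$. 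The key realization is that exactly where the troublesome factor blows up, the statistic is trivially bounded while both Hellinger distances are necessarily large, so no cancellation against $\frac{R}{P+Q}$ is needed.

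Summing the two regions bounds $\EE_R[g^2]$ by $(8\lambda + \frac{\lambda}{(\sqrt\lambda-1)^2})(\HE^2(P,R)+\HE^2(Q,R))$, which is at most $2(8\lambda + \frac{\lambda}{(\sqrt\lambda-1)^2})\max(\HE^2(P,R),\HE^2(Q,R))$. Optimizing the threshold (the choice $\lambda = 9/4$ makes the bracket equal to $27$) yields the constant $54 \le 55$, and any fixed value such as $\lambda = 2$ already lands near $55$. I expect the large-$R$ regime above to be the only delicate step; the remaining manipulations are the two pointwise inequalities and bookkeeping of constants.
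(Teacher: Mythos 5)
Your proof is correct and takes essentially the same approach as the paper's: both bound the variance by the second moment $\EE_{X\sim R}[g(X)^2]$, split the domain at the threshold $R \gtrless \lambda(P+Q)$, use the trivial bound $g^2 \le 1$ together with the pointwise estimate $(\sqrt{R}-\sqrt{P})^2 \ge R(\sqrt{\lambda}-1)^2/\lambda$ to control the mass of the heavy-$R$ region by Hellinger distances, and on the complement use the chi-square--to--Hellinger comparison (your pointwise inequality $\frac{(P-Q)^2}{P+Q} \le 2(\sqrt{P}-\sqrt{Q})^2$ is exactly the paper's $\CS(P,Q)\le 4\HE^2(P,Q)$) plus the triangle inequality. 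The paper's parameter $\beta$ plays precisely the role of your $\lambda$, and its optimized constant ($\approx 54$) coincides with yours at $\lambda = 9/4$.
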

\begin{proof}
Since $X_1, X_2, \ldots, X_n$, are i.i.d. samples from $R$, 
\begin{align*}
\Var(\TE(P, Q, X^n)) 
&= \frac{1}{n} \Var \left(\frac{P(X_1) - Q(X_1)}{P(X_1) + Q(X_1)} \right) 
\conf{\\}& \leq \frac{1}{n} \EE_{X \sim R} \left[\left( \frac{P(X) - Q(X)}{P(X) + Q(X)}\right)^2 \right].
\end{align*}
For $\beta > 1$, let $S$ be the set given by $\{x :
R(X) > \beta (P(X) + Q(X))\}$. For $x \in S$,
\begin{equation}
\label{eq:var_temp_1}
(\sqrt{P}(x) - \sqrt{R}(x))^2 \geq R(x)(\sqrt{\beta} -1 )^2/\beta.
\end{equation}
Hence,
\begin{align*}
& \EE_{X \sim R} \left[\left( \frac{P(X) - Q(X)}{P(X) + Q(X)}\right)^2 \right]  \\
&=  \EE_{X \sim R} \left[\left( \frac{P(X) - Q(X)}{P(X) + Q(X)}\right)^2 \indic_{x \in S} \right]
\conf{\\&} + \EE_{X \sim R} \left[\left( \frac{P(X) - Q(X)}{P(X) + Q(X)}\right)^2 \indic_{x \notin S} \right] \\
& \stackrel{(a)}{\leq} \EE_{X \sim R}[\indic_{x \in S}] + \beta \CS(P, Q) \\
& \stackrel{(b)}{\leq} \frac{2\beta}{(1-\sqrt{\beta})^2} 
\HE^2(P, R) + \beta \CS(P, Q)\\
& \stackrel{(c)}{\leq} \frac{2\beta}{(1-\sqrt{\beta})^2}  \HE^2(P, R) + 4\beta \HE^2(P, Q) \\
&  \stackrel{(d)}{\leq} \frac{2\beta}{(1-\sqrt{\beta})^2}  \HE^2(P, R) + 16\beta \HE^2(P, Q) \\
& \leq \left( \frac{2\beta}{(1-\sqrt{\beta})^2}  + 16 \beta \right) \cdot \max \left(\HE^2(P, R), \HE^2(Q,R) \right).
\end{align*}
$(a)$ follows from the definition of set $S$
 and $\CS$ statistic.~\eqref{eq:var_temp_1} implies $(b)$.~\eqref{eq:chi_hell} implies $(c)$.
$(d)$ follows from triangle inequality and the fact that $(a+b)^2 \leq 2a^2 + 2b^2$. Minimizing over $\beta > 1$ yields the lemma.
\end{proof}
In the next lemma we bound the expectation, which is the  crucial part of our proof. 
\begin{Lemma}
\label{lem:mean}
For distributions $P, Q$, and $R$, if 
$
\HE(Q, R) \geq \frac{\sqrt{2}}{\sqrt{2}\alpha-1} \HE(P, R),
$
for $\alpha \in (1/\sqrt{2}, 1)$, then
\[
\EE[\TE(P, Q, X^n)] \geq 2(1-\alpha^2) \HE^2(Q,R).
\]
\end{Lemma}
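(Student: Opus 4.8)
The plan is to reduce the $n$-sample expectation to a single-sample integral and then split it into a positive ``signal'' term that scales like $\HE^2(P,Q)$ and an ``error'' term controlled by $\HE(P,R)$. Since the $X_i$ are i.i.d.\ from $R$,
\[
\EE[\TE(P,Q,X^n)] = \EE_{X\sim R}\left[\frac{P(X)-Q(X)}{P(X)+Q(X)}\right] = \int_x R(x)\,\frac{P(x)-Q(x)}{P(x)+Q(x)}\,dx.
\]
Writing $R = P + (R-P)$ and using the identity $\EE_{X\sim P}[\TE(P,Q,X^n)] = \tfrac12\CS(P,Q)$ derived from \eqref{eq:diff}, I would decompose
\[
\EE[\TE(P,Q,X^n)] = \frac12\CS(P,Q) + \int_x (R(x)-P(x))\,\frac{P(x)-Q(x)}{P(x)+Q(x)}\,dx.
\]
The first term is the signal: by \eqref{eq:chi_hell} it satisfies $\tfrac12\CS(P,Q)\ge\HE^2(P,Q)$, and by the triangle inequality for the Hellinger metric $\HE(P,Q)\ge\HE(Q,R)-\HE(P,R)$, so the signal is at least $(\HE(Q,R)-\HE(P,R))^2$.

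The heart of the argument is to show that the second (error) integral is only \emph{bilinear}, i.e.\ bounded by a constant times $\HE(P,R)\,\HE(P,Q)$, rather than merely linear in $\HE(P,R)$ (the crude bound $|\tfrac{P-Q}{P+Q}|\le 1$ loses a factor and fails for small distances). The key is that $\tfrac{P-Q}{P+Q}$ vanishes where $P\approx Q$, so I would apply Cauchy--Schwarz with the weight $P+Q$:
\[
\left|\int_x (R-P)\frac{P-Q}{P+Q}\right| \le \sqrt{\int_x\frac{(R-P)^2}{P+Q}}\,\cdot\,\sqrt{\int_x(P+Q)\left(\frac{P-Q}{P+Q}\right)^2} = \sqrt{\int_x\frac{(R-P)^2}{P+Q}}\,\cdot\,\sqrt{\CS(P,Q)}.
\]
To bound the first factor by $O(\HE(P,R))$ I would reuse the set $S=\{x:R(x)>\beta(P(x)+Q(x))\}$ and inequality \eqref{eq:var_temp_1} exactly as in the proof of Lemma~\ref{lem:var}: on $S^c$ the ratio $\tfrac{(\sqrt P+\sqrt R)^2}{P+Q}$ is bounded by a function of $\beta$, so that part contributes $O(\HE^2(P,R))$ via $\int(\sqrt R-\sqrt P)^2 = 2\HE^2(P,R)$, while on $S$ the integrand is $O(R)$ and $\int_S R = O(\HE^2(P,R))$ by \eqref{eq:var_temp_1}. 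Combining with $\CS(P,Q)\le 4\HE^2(P,Q)$ (again \eqref{eq:chi_hell}) then yields an error bound of the form $c_\beta\,\HE(P,R)\,\HE(P,Q)$.

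Putting the two pieces together gives a quadratic form
\[
\EE[\TE(P,Q,X^n)] \ge \HE^2(P,Q) - c_\beta\,\HE(P,R)\,\HE(P,Q),
\]
and substituting $\HE(P,Q)\ge\HE(Q,R)-\HE(P,R)$ together with $\HE(P,R)=t\,\HE(Q,R)$ turns the right-hand side into a quadratic in $t$ times $\HE^2(Q,R)$. The hypothesis $\HE(Q,R)\ge\frac{\sqrt2}{\sqrt2\alpha-1}\HE(P,R)$ caps $t$ at $\alpha-\tfrac{1}{\sqrt2}$, and since the quadratic is decreasing in $t$ over this range, its minimum is attained at the endpoint and should equal exactly $2(1-\alpha^2)$. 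I expect the main obstacle to be precisely this last bookkeeping step: producing the \emph{clean} constants $2(1-\alpha^2)$ and threshold $\frac{\sqrt2}{\sqrt2\alpha-1}$ requires optimizing the free parameter $\beta$ in the error estimate jointly with $\alpha$, and the lossy steps $\tfrac12\CS\ge\HE^2$, $\CS\le 4\HE^2$, and the Cauchy--Schwarz must be arranged so that no slack is lost where equality is needed---consistent with the boundary case $R=P$, where the error vanishes and the bound collapses to the tight requirement $\alpha^2\ge\tfrac12$.
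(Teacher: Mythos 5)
Your decomposition $\EE[\TE(P,Q,X^n)] = \tfrac12\CS(P,Q) + \int_x (R-P)\tfrac{P-Q}{P+Q}\,dx$ is valid, but the step you identify as the heart of the argument is false, and not just unproven. You claim $\int_x \tfrac{(R-P)^2}{P+Q}\,dx = O\bigl(\HE^2(P,R)\bigr)$ via the split on $S=\{x: R>\beta(P+Q)\}$, asserting that on $S$ ``the integrand is $O(R)$.'' It is the opposite: on $S$ the denominator $P+Q$ is \emph{small} relative to $R$, so the integrand is at least $\beta(R-P)^2/R \approx \beta R$ and can be arbitrarily larger than $R$. Concretely, take $\cX=\{0,1\}$, $P=Q$ with $P(1)=p$, and $R(0)=R(1)=1/2$: then
\[
\int_x \frac{(R(x)-P(x))^2}{P(x)+Q(x)}\,dx \;\geq\; \frac{(1/2-p)^2}{2p} \;\longrightarrow\; \infty \quad\text{as } p\to 0,
\]
while $\HE^2(P,R)\to 1-1/\sqrt{2}$. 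The technique you are transplanting from Lemma~\ref{lem:var} works there only because that integrand, $R\cdot\bigl(\tfrac{P-Q}{P+Q}\bigr)^2$, is pointwise at most $R$; your integrand has no such bound. Worse, the bilinear conclusion you want is itself false: perturbing the example to $Q(1)=p(1+\eta)$ gives a true error term of order $\eta/4$, while $\HE(P,R)\,\HE(P,Q)=O(\eta\sqrt{p})$, so no constant $c_\beta$ can make your claimed inequality hold. The root cause is that a chi-square-type integral controls a Hellinger distance only when the denominator is \emph{matched} to the numerator: $\int\tfrac{(R-P)^2}{P+R}=\CS(P,R)\leq 4\HE^2(P,R)$ by~\eqref{eq:chi_hell}, but $\int\tfrac{(R-P)^2}{P+Q}$ is not a Hellinger-type quantity at all.

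Repairing this mismatch is exactly the content of the paper's proof, which takes a different decomposition: it completes squares to get $2\EE[\TE] = \int\tfrac{(Q-R)^2-(P-R)^2}{P+Q}$, then applies the exact pointwise identity
\[
\frac{(q-r)^2-(p-r)^2}{p+q}=\frac{(q-r)^2}{q+r}-\frac{(p-r)^2}{p+r}+\frac{2r(q-r)(p-r)(p-q)}{(p+q)(q+r)(p+r)},
\]
which re-matches every denominator; the first two terms integrate to $\CS(Q,R)-\CS(P,R)$ and the cross term is bounded by $2\sqrt{\CS(P,R)\CS(Q,R)}$ via Cauchy--Schwarz with the matched weight $\sqrt{(P+R)(Q+R)}$. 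Note the signal there is $\CS(Q,R)\geq 2\HE^2(Q,R)$, not your $\CS(P,Q)\geq 2\HE^2(P,Q)\geq 2(\HE(Q,R)-\HE(P,R))^2$, which is strictly weaker near the threshold. Even if you patch your error estimate by splitting on $S$ \emph{before} Cauchy--Schwarz (the $S$-part is $\leq\int_S(R+P)=O_\beta(\HE^2(P,R))$, and on $S^c$ one gets $\int_{S^c}\tfrac{(R-P)^2}{P+Q}\leq 4(\beta+1)\HE^2(P,R)$), the resulting error coefficient is at least $4\sqrt{\beta+1}>4\sqrt{2}$ on the cross term; writing $t=\HE(P,R)/\HE(Q,R)$, the bound $(1-t)^2 - 4\sqrt{\beta+1}\,t(1+t) - O_\beta(t^2)$ is already negative at $\gamma=1/t=4$, where the lemma requires the strictly positive value $2(1-\alpha^2)\approx 0.17$. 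So the approach, even repaired, cannot reach the stated threshold $\tfrac{\sqrt2}{\sqrt2\alpha-1}$ and constant $2(1-\alpha^2)$; the matched-denominator identity is the missing idea.
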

\begin{proof}
Since $X_1, X_2, \ldots, X_n$ are i.i.d. samples from $R$,
\begin{align*}
    \conf{&} 2 \EE[\TE(P, Q, X^n)] \conf{\\}
\arxiv{&= \int_{x \in \cX} \frac{R(x)(P(x) - Q(x))}{P(x)+Q(x)} dx \\
& =  \int_{x \in \cX} \frac{(Q(x)-R(x))^2 - (P(x)-R(x))^2 + P^2(x) - Q^2(x)}{P(x)+Q(x)} dx \\}
& = \int_{x \in \cX} \frac{(Q(x)-R(x))^2 - (P(x)-R(x))^2}{P(x)+Q(x))} \conf{\\&}+ (P(x) - Q(x) dx \\
& =  \int_{x \in \cX} \frac{(Q(x)-R(x))^2 - (P(x)-R(x))^2}{P(x)+Q(x)} dx,
\end{align*}
where the last equality follows from the fact that $P$ and $Q$ are probability distributions 
and hence integrates to $1$. 
For any three non-negative numbers $p, q$, and $r$, \conf{it can be shown that}
\begin{align*}
& \frac{(q-r)^2}{p+q} - \frac{(p-r)^2}{p+q} \\
\arxiv{& = \frac{(q-r)^2}{q+r} - \frac{(p-r)^2}{p+r} 
+ \frac{(q-r)^2}{p+q} - \frac{(q-r)^2}{q+r} + \frac{(p-r)^2}{p+r} - \frac{(p-r)^2}{p+q} \\
& =  \frac{(q-r)^2}{q+r} - \frac{(p-r)^2}{p+r} 
+ \frac{(q-r)^2(r-p)}{(p+q)(q+r)} + \frac{(p-r)^2(q-r)}{(p+q)(p+r)} \\
& =  \frac{(q-r)^2}{q+r} - \frac{(p-r)^2}{p+r} 
+ \frac{(q-r)(p-r)}{(p+q)(q+r)(p+r)}\left(-(q-r)(p+r) + (p-r)(q+r) \right)\\}
& =  \frac{(q-r)^2}{q+r} - \frac{(p-r)^2}{p+r} 
+ \frac{2r(q-r)(p-r)(p-q)}{(p+q)(q+r)(p+r)}.
\end{align*}
Applying the above equality in the expectation, and
substituting the definition of $\CS$ statistic,
\begin{align*}
2 \EE[\TE(P, Q, X^n)] 
\conf{}& \geq \CS(Q, R) - \CS(P, R) \conf{\\&} -
2\norm{\frac{(P-Q)(P-R)(Q-R)R}{(P+Q)(P+R)(Q+R)}}_1
\end{align*}
We first bound the last term. 
\begin{align*}
\conf{&}    \norm{\frac{(P-Q)(P-R)(Q-R)R}{(P+Q)(P+R)(Q+R)}}_1 \conf{\\}
    & \leq\norm{\frac{(P-R)(Q-R)R}{(P+R)(Q+R)}}_1 \\
    & \leq \norm{\frac{(P-R)(Q-R)}{\sqrt{(P+R)(Q+R)}}}_1 \norm{\frac{R}{\sqrt{(P+R)(Q+R)}}}_\infty \\
 & \leq \norm{\frac{(P-R)(Q-R)}{\sqrt{(P+R)(Q+R)}}}_1 \\
 & \leq \norm{\frac{(P-R)}{\sqrt{(P+R)}}}_2
\norm{\frac{(Q-R)}{\sqrt{(Q+R)}}}_2 \\
& = \sqrt{\CS(Q, R) \CS(P, R)},
\end{align*}
where the last inequality follows by the Cauchy-Schwarz inequality. Combining the above equations, 
\begin{align*}
2 \EE[\TE(P, Q, X^n)] 
& \geq \CS(Q, R) - \CS(P, R) \conf{\\&}- 2  \sqrt{\CS(Q, R) \CS(P, R)}.
\end{align*}
Let $\gamma = \frac{\HE(Q,R)}{\HE(P,R)} \geq 2$.
We now lower bound the above term in terms of Hellinger distances. 
\begin{align*}
& \CS(Q, R) - \CS(P, R) - 2  \sqrt{\CS(Q, R) \CS(P, R)} \\
& \stackrel{(a)}{\geq}  \CS(Q, R) - 4\HE^2(P, R) - 4  \sqrt{\CS(Q, R) \HE^2(P, R)} \\
& \stackrel{(b)}{\geq}   2 \HE^2(Q,R) - 4\HE^2(P, R) - 4  \sqrt{2 \HE^2(Q,R) \HE^2(P, R)} \\
& \stackrel{(c)}{\geq} 2 \HE^2(Q,R) - \frac{4\HE^2(Q, R)}{\gamma^2} - \frac{4\sqrt{2} \HE^2(Q,R)}{\gamma},
\end{align*}
where $(a)$ follows by~\eqref{eq:chi_hell}.
 $z - 4\HE^2(P, R) - 4\sqrt{z \HE^2(P, R)} $ is an increasing function of $z \in [4\HE^2(P,R), \infty)$. Furthermore by~\eqref{eq:chi_hell}, $\CS(Q, R)  \geq 2 \HE^2(Q,R) \geq 2 \gamma^2 \HE^2(P,R) \geq 4 \HE^2(P,R)$. Hence substituting a lower bound on $\CS(Q, R)$ yields $(b)$. $(c)$ follows from the definition of $\gamma$.
Hence,
\begin{align*}
\EE[\TE(P, Q, X^n)] & \geq \HE^2(Q,R) 
\left(1 - \frac{2}{\gamma^2} 
- \frac{2\sqrt{2}}{\gamma}\right)  \\
& = \HE^2(Q,R) 
\left(2 - \left(\frac{\sqrt{2}}{\gamma} + 1 \right)^2\right).
\end{align*}
Substituting $\gamma = \frac{\sqrt{2}}{\sqrt{2}\alpha -1}$ yields the result.
\end{proof}
The proof of Theorem~\ref{thm:main} uses the Bernstein inequality, which we state for completeness.
\begin{Lemma}[Bernstein inequality]
Let $Z_1, Z_2, \ldots Z_n$ are i.i.d. random variables and $M = \max_{Z} |Z|$ and $\sigma(Z)$ denote its variance. Then
with probability at least $ 1- \delta$,
\[
 \EE[Z] - \frac{1}{n} \sum_i Z_i 
\leq  2\sigma(Z)\sqrt{\frac{\log \frac{1}{\delta}}{n}} + \frac{4M}{3n} \log \frac{1}{\delta}.
\]
\end{Lemma}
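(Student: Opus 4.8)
The plan is to prove this by the classical Cramér--Chernoff (moment generating function) method, treating the centered summand as a sub-gamma random variable. First I would reduce to the lower tail of a centered sum: set $Y_i = \EE[Z] - Z_i$, so that $\EE[Y_i] = 0$, $\Var(Y_i) = \sigma^2(Z)$, and $|Y_i| \le 2M$ since $|Z_i| \le M$ and $|\EE[Z]| \le M$. The claimed inequality is then exactly the statement that $\Pr\!\big(\tfrac1n\sum_i Y_i \ge t\big) \le \delta$ for $t = 2\sigma(Z)\sqrt{\log(1/\delta)/n} + \tfrac{4M}{3n}\log(1/\delta)$, where I write $\sigma^2(Z) = \Var(Z)$.

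The crucial step is a single-variable MGF bound. For $0 \le \lambda < 3/(2M)$ I would expand $e^{\lambda Y_i} = 1 + \lambda Y_i + \sum_{k \ge 2} \lambda^k Y_i^k / k!$, use $\EE[Y_i] = 0$ to kill the linear term, and bound the higher moments by $\EE[|Y_i|^k] \le \sigma^2(Z)\,(2M)^{k-2}$, which is immediate from $|Y_i| \le 2M$. Together with $k! \ge 2 \cdot 3^{k-2}$, summing the resulting geometric series and applying $1 + x \le e^x$ yields
\[
\EE\big[e^{\lambda Y_i}\big] \le \exp\!\left(\frac{\lambda^2 \sigma^2(Z)}{2\big(1 - \tfrac{2M\lambda}{3}\big)}\right).
\]
This sub-gamma moment bound is the heart of the argument; everything after it is mechanical.

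Next I would tensorize using independence, $\EE[e^{\lambda \sum_i Y_i}] = \prod_i \EE[e^{\lambda Y_i}]$, which gives the same bound with $\sigma^2(Z)$ replaced by $n\sigma^2(Z)$ in the numerator. A Chernoff bound $\Pr(\sum_i Y_i \ge nt) \le \inf_\lambda e^{-\lambda n t}\,\EE[e^{\lambda \sum_i Y_i}]$, evaluated at $\lambda = t/(\sigma^2(Z) + \tfrac{2Mt}{3})$, produces the standard Bernstein tail
\[
\Pr\Big(\tfrac1n\textstyle\sum_i Y_i \ge t\Big) \le \exp\!\left(-\frac{n t^2}{2\sigma^2(Z) + \tfrac{4M}{3}t}\right).
\]

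Finally I would invert this tail. Setting the right-hand side to $\delta$ and writing $L = \log(1/\delta)$ amounts to solving $n t^2 - \tfrac{4M}{3}L\,t - 2\sigma^2(Z)L \ge 0$; the positive root combined with $\sqrt{a+b} \le \sqrt a + \sqrt b$ and $\sqrt2 \le 2$ gives the stated threshold $t = 2\sigma(Z)\sqrt{L/n} + \tfrac{4M}{3n}L$, completing the proof. The main obstacle is the MGF bound in the second step: controlling all higher moments uniformly by the variance and the almost-sure bound, and arranging the constants (the inequality $k! \ge 2\cdot 3^{k-2}$ and the geometric summation) so that the factor $3$ in the denominator---and hence the $\tfrac{4M}{3n}$ term---comes out correctly. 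The remaining Chernoff optimization and quadratic inversion are routine.
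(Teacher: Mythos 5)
The paper never proves this lemma---it is the classical Bernstein inequality, quoted ``for completeness'' as a known result---so there is no internal proof to compare against; your proposal supplies the standard Cram\'er--Chernoff derivation, and it is correct with the constants exactly as stated. Checking the details: centering gives $|Y_i| \le 2M$; the moment bounds $\EE[|Y_i|^k] \le \sigma^2(Z)\,(2M)^{k-2}$ together with $k! \ge 2\cdot 3^{k-2}$ do yield the sub-gamma bound $\EE[e^{\lambda Y_i}] \le \exp\bigl(\lambda^2\sigma^2(Z)/(2(1-\tfrac{2M\lambda}{3}))\bigr)$ for $0 \le \lambda < 3/(2M)$; the choice $\lambda = t/(\sigma^2(Z) + \tfrac{2Mt}{3})$ is admissible in that range and gives exactly the tail $\exp\bigl(-nt^2/(2\sigma^2(Z) + \tfrac{4M}{3}t)\bigr)$; and inverting via the positive root of $nt^2 - \tfrac{4M}{3}Lt - 2\sigma^2(Z)L = 0$, namely $t = \tfrac{2ML}{3n} + \sqrt{\tfrac{4M^2L^2}{9n^2} + \tfrac{2\sigma^2(Z)L}{n}}$, followed by $\sqrt{a+b} \le \sqrt{a} + \sqrt{b}$ and $\sqrt{2} \le 2$, lands precisely on the stated threshold $2\sigma(Z)\sqrt{L/n} + \tfrac{4M}{3n}L$ (one must also note the exponent $nt^2/(2\sigma^2(Z)+\tfrac{4M}{3}t)$ is increasing in $t$, so the bound holds for all $t$ at or above the root---a point you use implicitly and could make explicit). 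One small remark: the lemma says $\sigma(Z)$ ``denotes its variance,'' but the bound is only correct---and your proof only delivers it---with $\sigma(Z)$ read as the standard deviation, i.e.\ $\sigma^2(Z) = \Var(Z)$; your reading silently repairs this slight imprecision in the paper's statement, and is the reading consistent with how the lemma is invoked in the proof of Theorem~\ref{thm:main}, where $\sigma(Z) \le 8\HE(Q,R)$ is paired with the variance bound $\Var \le \tfrac{55}{n}\max(\HE^2(P,R),\HE^2(Q,R))$ from Lemma~\ref{lem:var}.
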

\begin{proof}[Proof of Theorem~\ref{thm:main}]
%\label{app:bern}
Without loss of generality, we assume $\HE(Q,R) \geq \gamma \HE(P, R)$. 
Let $\gamma = \frac{\sqrt{2}}{\sqrt{2}\alpha - 1}$ for $\alpha  \in (1/\sqrt{2}, 1)$. We apply Bernstein theorem based on our bounds on expectations and variances. 
In particular, let $Z = \frac{P(x) - Q(x)}{P(x)+Q(x)}$. Hence by Lemma~\ref{lem:mean},
\[
\EE[Z] \geq 2(1-\alpha^2) \HE^2(Q,R).
\]
By Lemma~\ref{lem:var}, 
\[
\sigma(Z) \leq 8 \HE(Q,R),
\]
and $M = \max_{Z} |Z| \leq 1$. Hence, 
with probability at least $1-\delta$,
\begin{align}
\TE(P, Q, X^n) & \leq 2(1-\alpha^2) \HE^2(Q,R) \conf{\nonumber \\&}- \frac{c \HE(Q,R)\sqrt{\log \frac{1}{\delta}}}{\sqrt{n}} - \frac{c \log \frac{1}{\delta}}{n},
\label{eq:proof}
\end{align}
for some constant $c > 1$. Hence if $n \geq \frac{100c^2 \log \frac{1}{\delta}}{\HE^2(Q,R)(1-\alpha^2)^2}$, then with probability at least $1- \delta$,
\[
\TE(P, Q, X^n) \geq (1-\alpha^2) \HE^2(Q,R) > 0.
\]
The theorem follows by observing that
\[
\HE(P,Q) \leq \HE(P,R) + \HE(Q,R)
\leq (\gamma + 1) \HE(Q,R).
\]
\end{proof}
\section{Proofs of other results}

\subsection{Proof of Lemma~\ref{lem:np_counter}}
\label{app:np_counter}

We give a simple example with Bernoulli distributions. Similar results
hold for other distributions such as Gaussian mixtures. Let $B(p)$ be the Bernoulli distribution with parameter $p$.  Let $P=B(0)$
and $Q=B(1/2)$. By~\eqref{eq:sim_sam}, $N^*_{1/3}(P,Q) = \Theta(1)$.  

Let $R = B(1/(16\gamma^2))$. It can be shown that $\HE(P, R) \leq 1/(4\gamma)$ and  $\HE(Q, R) \geq 1/3$ for $\gamma > 1$. Hence,
\[
\frac{\HE(Q,R)}{\HE(P, R)} \geq \frac{\frac{1}{3}}{\frac{1}{4\gamma}} \geq \gamma.
\]
Let $\delta > 0$. Given $n \geq 16\gamma^2 \log \frac{1}{\delta} $ samples from $R$,
then with probability at least $ 1-\delta$, at least one of the symbols is $1$. Then, $P(X^n) = 0$ and $Q(X^n) = 1/2^n$ and for any finite threshold $t$, the test outputs $\cH_1$. Hence, the error probability of the Neyman-Pearson test is at least $1-\delta$. Taking the limit as $\delta\to 0$ shows that Neyman-Pearson test is not robust.

\subsection{Proof of Lemma~\ref{lem:scheffe}}
\label{app:scheffe}

Let $\cX = \{0, 1, 2\}$ and $\epsilon = 1/(2K)$. Let $P$ be given by
$P(0) = 1/2$, $P(1) = 1/2-\epsilon$, $P(2) = \epsilon$. Let $Q$ be given by
$Q(0) = 1/2-\epsilon$, $Q(1) = 1/2 + \epsilon$, and $Q(2) = 0$. 

The Hellinger distance between $P$ and $Q$ is $\Theta(\sqrt{\epsilon})$. By~\eqref{eq:sim_sam}, $N^*_{\delta}(P, Q) = \Theta(\log(1/\delta)/\epsilon)$.

Scheffe's test
measures empirical probability of $S = \{x : P(x) \geq Q(x)\}$ and infers the underlying hypothesis. For the above example, $S = \{0, 2\}$. For this set $S$, $P(S) = \frac{1}{2} + \epsilon$ and $Q(S) = \frac{1}{2} - \epsilon$. Hence, the sample complexity of Scheffe test is lower bounded by the sample complexity of the best hypothesis  test between $B(1/2+\epsilon)$ and $B(1/2-\epsilon)$. Therefore by~\eqref{eq:sim_sam},  
\begin{align*}
N^S_{\delta}(P, Q) &= \Omega(\log(1/\delta)/\epsilon^2) \conf{\\&}= \Omega(N^*_{\delta}(P, Q)/\epsilon)\conf{\\&} = \Omega(K \cdot N^*_{\delta}(P, Q)).
\end{align*}

\subsection{Proof of Theorem~\ref{thm:zero_mean}}
\label{app:zero_mean}
Let $Q = B(0)$, $P=B(2\epsilon)$, and $R=B(\epsilon)$, where we choose $\epsilon$ later. For this choice of $P, Q$, and $ R$, 
\[
\EE_{X^n \sim R}[T(P, Q, X^n)] = 0.
\]
We now bound the ratio of Hellinger distances,
\begin{align*}
    \frac{\HE^2(Q,R)}{\HE^2(P,R)}
      & = \frac{1 - \sqrt{1-\epsilon}}{1 - \sqrt{(1-2\epsilon)(1-\epsilon)} - \sqrt{2}\epsilon}.
\end{align*}
Taking the right limit as $\epsilon \to 0$ and using L'Hôpital's rule yields,
\begin{align*}
 \lim_{\epsilon \to 0^+}    \frac{1 - \sqrt{1-\epsilon}}{1 - \sqrt{(1-2\epsilon)(1-\epsilon)} - \sqrt{2}\epsilon}
\conf{} = \frac{1/2}{3/2 - \sqrt{2}} 
\conf{\\} = \frac{1}{3-2\sqrt{2}} 
\conf{} = \frac{1}{(\sqrt{2} - 1)^2}.
\end{align*}
Hence, for every $\gamma < \frac{1}{\sqrt{2} -1}$, 
there exists an $\epsilon$ such that $\frac{\HE^2(Q,R)}{\HE^2(P,R)} \geq \gamma^2$.

\subsection{Proof of Corollary~\ref{cor:dp}}
\label{app:dp}
We provide the proof when $R=P$. The proof for the case when $R=Q$ is similar and omitted. 
By the tail bounds of the Laplace random variable, there exists a constant $c'$ such that with probability at least $ 1- \delta/2$,
\begin{align*}
T_\epsilon(P, Q, X^n) &\geq T(P, Q, X^n)- \frac{2c' \Delta(P,Q) \cdot \log \frac{2}{\delta}}{n \epsilon} \\
& \geq  T(P, Q, X^n)- \frac{c' \cdot \log \frac{2}{\delta}}{n}.
\end{align*}
Since $R=P$, by~\eqref{eq:chi_hell},
\[
\EE[\TE(P, Q, X^n)] = \frac{1}{2}\CS(P,Q) \geq  \HE^2(P,Q).
\]
Similar to the proof of Theorem~\ref{thm:main}, applying the Bernstein inequality yields that with probability at least $ 1- \delta/2$,
\[
\TE(P, Q, X^n) \geq \HE^2(P,Q) - \frac{c \HE(Q,P)\sqrt{\log \frac{2}{\delta}}}{\sqrt{n}} - \frac{c \log \frac{2}{\delta}}{n}.
\]
Combining the above two equations yields that with probability at least $1-\delta$,
\begin{align*}
T_\epsilon(P, Q, X^n) & \geq \HE^2(P,Q) - \frac{c \HE(Q,P)\sqrt{\log \frac{2}{\delta}}}{\sqrt{n}} \conf{\\&}- \frac{(c + c') \log \frac{2}{\delta}}{n}.
\end{align*}
Hence if $n \geq c'' \left( \frac{\log \frac{1}{\delta}}{\HE^2(P,Q)}\right)$ for a sufficiently large constant $c''$, then with probability at least $1-\delta$,
\[
T_\epsilon(P, Q, X^n) > 0,
\]
and hence the result.

\section{Relationship between distances}

\subsection{Relationship between Hellinger distance and total variation distance}
\label{app:hell-tv}
\textbf{Upper bound:}
\begin{align*}
 \HE^2(P,Q) & = \frac{1}{2} \left \| \sqrt{P} - \sqrt{Q} \right \|^2_2  \\
 &\leq \frac{1}{2} \left \| (\sqrt{P} - \sqrt{Q})
  (\sqrt{P} + \sqrt{Q})\right \|_1  \\
   & =  \frac{1}{2} \left \|P - Q\right \|_1  \\
   & =  \TV(P, Q).
\end{align*}
\textbf{Lower bound:} 
\begin{align*}
 \HE^2(P,Q) & = \frac{1}{2} \left \| \sqrt{P} - \sqrt{Q} \right \|^2_2  \\
 & \stackrel{(a)}{\geq} \frac{1}{8} \left \| \sqrt{P} - \sqrt{Q} \right \|^2_2 \cdot  \left \| \sqrt{P} + \sqrt{Q} \right \|^2_2  \\
  & \stackrel{(b)}{\geq} \frac{1}{8} \left \| P - Q\right \|^2_1   \\
   & =  \frac{1}{2} \TV^2(P, Q),
\end{align*}
where $(a)$ follows from the fact that $\left \| \sqrt{P} + \sqrt{Q} \right \|_2 \leq 2$ and $(b)$ uses the Cauchy-Schwarz inequality.
\subsection{Relationship between Hellinger distance and symmetric chi-squared statistic}
\label{app:chi-hell}
\begin{align*}
 \HE^2(P,Q) & = \frac{1}{2} \left \| \sqrt{P} - \sqrt{Q} \right \|^2_2  \\
 & = \frac{1}{2}\left  \| \frac{(\sqrt{P} - \sqrt{Q})(\sqrt{P} + \sqrt{Q})}{(\sqrt{P} + \sqrt{Q})} \right \|^2 _2 \\
  & = \frac{1}{2} \left \| \frac{P - Q}{(\sqrt{P} + \sqrt{Q})} \right \|^2_2.
\end{align*}
The proof of~\eqref{eq:chi_hell} follows by observing that for every $x$,
\[
\sqrt{P(x) + Q(x)} \leq \sqrt{P(x)} + \sqrt{Q(x)} \leq \sqrt{2(P(x) + Q(x))}.
\]

\section{Conclusion}

We proposed a simple robust hypothesis test that has the same complexity of the optimal Neyman-Pearson test up to constants and is robust to distribution perturbations in Hellinger distance. The test is relatively parameter free and easy to use. We evaluated the test on synthetic distributions and also provided extensions with differential privacy. Bridging the $\sqrt{2}$-gap between the upper and lower bounds is an interesting future direction.

\newpage
\bibliography{arxiv}

\begin{thebibliography}{20}
\providecommand{\natexlab}[1]{#1}
\providecommand{\url}[1]{\texttt{#1}}
\expandafter\ifx\csname urlstyle\endcsname\relax
  \providecommand{\doi}[1]{doi: #1}\else
  \providecommand{\doi}{doi: \begingroup \urlstyle{rm}\Url}\fi

\bibitem[Acharya et~al.(2014)Acharya, Jafarpour, Orlitsky, and
  Suresh]{acharya2014sorting}
J.~Acharya, A.~Jafarpour, A.~Orlitsky, and A.~T. Suresh.
\newblock Sorting with adversarial comparators and application to density
  estimation.
\newblock In \emph{2014 IEEE International Symposium on Information Theory},
  pages 1682--1686. IEEE, 2014.

\bibitem[Acharya et~al.(2018)Acharya, Falahatgar, Jafarpour, Orlitsky, and
  Suresh]{acharya2018maximum}
J.~Acharya, M.~Falahatgar, A.~Jafarpour, A.~Orlitsky, and A.~T. Suresh.
\newblock Maximum selection and sorting with adversarial comparators.
\newblock \emph{The Journal of Machine Learning Research}, 19\penalty0
  (1):\penalty0 2427--2457, 2018.

\bibitem[Ashtiani et~al.(2018)Ashtiani, Ben-David, Harvey, Liaw, Mehrabian, and
  Plan]{ashtiani2018nearly}
H.~Ashtiani, S.~Ben-David, N.~Harvey, C.~Liaw, A.~Mehrabian, and Y.~Plan.
\newblock Nearly tight sample complexity bounds for learning mixtures of
  gaussians via sample compression schemes.
\newblock In \emph{Advances in Neural Information Processing Systems}, pages
  3412--3421, 2018.

\bibitem[Bar-Yossef and Papadimitriou(2002)]{bar2002complexity}
Z.~Bar-Yossef and C.~H. Papadimitriou.
\newblock \emph{The complexity of massive data set computations}.
\newblock PhD thesis, University of California, Berkeley, 2002.

\bibitem[Bun et~al.(2019)Bun, Kamath, Steinke, and Wu]{bun2019private}
M.~Bun, G.~Kamath, T.~Steinke, and S.~Z. Wu.
\newblock Private hypothesis selection.
\newblock In \emph{Advances in Neural Information Processing Systems}, pages
  156--167, 2019.

\bibitem[Canonne et~al.(2019)Canonne, Kamath, McMillan, Smith, and
  Ullman]{canonne2019structure}
C.~L. Canonne, G.~Kamath, A.~McMillan, A.~Smith, and J.~Ullman.
\newblock The structure of optimal private tests for simple hypotheses.
\newblock In \emph{Proceedings of the 51st Annual ACM SIGACT Symposium on
  Theory of Computing}, pages 310--321. ACM, 2019.

\bibitem[Chan et~al.(2014)Chan, Diakonikolas, Servedio, and
  Sun]{chan2014efficient}
S.-O. Chan, I.~Diakonikolas, R.~A. Servedio, and X.~Sun.
\newblock Efficient density estimation via piecewise polynomial approximation.
\newblock In \emph{Proceedings of the forty-sixth annual ACM symposium on
  Theory of computing}, pages 604--613, 2014.

\bibitem[Cover and Thomas(2012)]{cover2012elements}
T.~M. Cover and J.~A. Thomas.
\newblock \emph{Elements of information theory}.
\newblock John Wiley \& Sons, 2012.

\bibitem[Daskalakis and Kamath(2014)]{daskalakis2014faster}
C.~Daskalakis and G.~Kamath.
\newblock Faster and sample near-optimal algorithms for proper learning
  mixtures of gaussians.
\newblock In \emph{Conference on Learning Theory}, pages 1183--1213, 2014.

\bibitem[Daskalakis et~al.(2012)Daskalakis, Diakonikolas, and
  Servedio]{daskalakis2012learning}
C.~Daskalakis, I.~Diakonikolas, and R.~A. Servedio.
\newblock Learning k-modal distributions via testing.
\newblock In \emph{Proceedings of the twenty-third annual ACM-SIAM symposium on
  Discrete Algorithms}, pages 1371--1385. SIAM, 2012.

\bibitem[Devroye and Lugosi(2012)]{devroye2012combinatorial}
L.~Devroye and G.~Lugosi.
\newblock \emph{Combinatorial methods in density estimation}.
\newblock Springer Science \& Business Media, 2012.

\bibitem[Diakonikolas et~al.(2017)Diakonikolas, Kane, and
  Stewart]{diakonikolas2016learning}
I.~Diakonikolas, D.~M. Kane, and A.~Stewart.
\newblock Learning multivariate log-concave distributions.
\newblock In \emph{Conference on Learning Theory}, pages 711--727, 2017.

\bibitem[Dwork et~al.(2014)Dwork, Roth, et~al.]{CynthiaA14}
C.~Dwork, A.~Roth, et~al.
\newblock The algorithmic foundations of differential privacy.
\newblock \emph{Foundations and Trends{\textregistered} in Theoretical Computer
  Science}, 9\penalty0 (3--4):\penalty0 211--407, 2014.

\bibitem[G{\"u}l and Zoubir(2017)]{gul2017minimax}
G.~G{\"u}l and A.~M. Zoubir.
\newblock Minimax robust hypothesis testing.
\newblock \emph{IEEE Transactions on Information Theory}, 63\penalty0
  (9):\penalty0 5572--5587, 2017.

\bibitem[Huber(1965)]{huber1965robust}
P.~J. Huber.
\newblock A robust version of the probability ratio test.
\newblock \emph{The Annals of Mathematical Statistics}, pages 1753--1758, 1965.

\bibitem[Levy(2008)]{levy2008robust}
B.~C. Levy.
\newblock Robust hypothesis testing with a relative entropy tolerance.
\newblock \emph{IEEE Transactions on Information Theory}, 55\penalty0
  (1):\penalty0 413--421, 2008.

\bibitem[Neyman and Pearson(1933)]{neyman1933ix}
J.~Neyman and E.~S. Pearson.
\newblock Ix. on the problem of the most efficient tests of statistical
  hypotheses.
\newblock \emph{Philosophical Transactions of the Royal Society of London.
  Series A, Containing Papers of a Mathematical or Physical Character},
  231\penalty0 (694-706):\penalty0 289--337, 1933.

\bibitem[Scheff{\'e}(1947)]{scheffe1947useful}
H.~Scheff{\'e}.
\newblock A useful convergence theorem for probability distributions.
\newblock \emph{The Annals of Mathematical Statistics}, 18\penalty0
  (3):\penalty0 434--438, 1947.

\bibitem[Suresh et~al.(2014)Suresh, Orlitsky, Acharya, and
  Jafarpour]{suresh2014near}
A.~T. Suresh, A.~Orlitsky, J.~Acharya, and A.~Jafarpour.
\newblock Near-optimal-sample estimators for spherical gaussian mixtures.
\newblock In \emph{Advances in Neural Information Processing Systems}, pages
  1395--1403, 2014.

\bibitem[Yatracos(1985)]{yatracos1985rates}
Y.~G. Yatracos.
\newblock Rates of convergence of minimum distance estimators and kolmogorov's
  entropy.
\newblock \emph{The Annals of Statistics}, pages 768--774, 1985.

\end{thebibliography}
\bibliographystyle{abbrvnat}

\end{document}